\newcommand{\q}{\quad}
\newcommand{\qq}{\quad\quad}
\newcommand{\sgn}{\textup{sgn}\, }
\newcommand{\essinf}{\mathop{\textup{essinf}}}
\def\rr{{\mathbb R}}
\def\rz{{{\rr}^n}}
\def\az{\alpha}
\def\bz{\beta}
\def\dz{\delta}
\def\lz{\lambda}
\def\wz{\omega}
\def\l{\left}
\def\r{\right}
\newtheorem{thm}{Theorem}[section]
\newtheorem{lem}{Lemma}[section]
\newtheorem{rmk}{Remark}[section]
\newtheorem{dfn}{Definition}[section]
\begin{document}

\title
[ commutators on weighted Morrey spaces ] {Necessary and sufficient
conditions for boundedness of commutators of the general fractional
integral operators on weighted Morrey spaces}

\author{Zengyan Si}

\address{ Zengyan Si\\
School of Mathematics and Information Science\\
Henan Polytechnic University\\
Jiaozuo 454000 \\
P. R. China}

\email{sizengyan@yahoo.cn}

\author{Fayou Zhao$^*$}

\address{Fayou Zhao (Corresponding author)\\
Department of Mathematics\\
Shanghai University\\
Shanghai 200444\\
P. R. China}

\email{zhaofayou2008@yahoo.com.cn}

\thanks{The second
author is the corresponding author. The research was supported by
Shanghai Leading Academic Discipline Project (Grant No. J50101).}


\subjclass[2000]{42B20; 42B35}

\keywords{ commutator; weighted Lipschitz function; weighted Morrey
space; fractional integrals.}

\begin{abstract}
We prove that $b$ is in $Lip_{\bz}(\bz)$ if and only if the
commutator $[b,L^{-\alpha/2}]$ of the multiplication operator by $b$
 and the general fractional integral operator $L^{-\alpha/2}$ is bounded from the
 weighed Morrey space $L^{p,k}(\omega)$ to $L^{q,kq/p}(\omega^{1-(1-\alpha/n)q},\omega)$,
 where $0<\beta<1$, $0<\alpha+\beta<n, 1<p<{n}/({\alpha+\beta})$,
 ${1}/{q}={1}/{p}-{(\alpha+\beta)}/{n},$
$0\leq k<{p}/{q},$ $\omega^{{q}/{p}}\in A_1$ and $ r_\omega>
\frac{1-k}{p/q-k},$ and here $r_\omega$ denotes the critical index
of $\omega$ for the reverse H\"{o}lder condition.
\end{abstract}

\maketitle
\section{Introduction and main results}

 Suppose that $L$ is a linear operator on $L^2 (\rz)$ which generates an analytic semigroup
$e^{-tL}$ with a kernel $p_t(x, y)$ satisfying a Gaussian upper
bound, that is,
\begin{equation} \label{G} |p_t(x,y)|\leq \frac{C}{t^{{n}/{2}}}e^{-c\frac{|x-y|^2}{t}}\end{equation}
for $x, y\in \rz$ and all $t>0$. Since we assume only upper bound on heat kernel $p_t(x,y)$ and no regularity on its space variables, this property \eqref{G} is satisfied by a class of differential operator,
see \cite{DY1} for details.
\par
For $0<\alpha<n, $ the general fractional integral $L^{-\alpha/2}$
of the operator $L$ is defined by
$$L^{-\frac{\alpha}{2}}f(x)=\frac{1}{\Gamma(\frac{\alpha}{2})}\int_0^\infty e^{-tL}f\frac{dt}{t^{-\alpha/2+1}}(x).$$
Note that if $L = -\Delta$ is the Laplacian on $\rz$, then
$L^{-\alpha/2}$ is the classical fractional integral $I_\alpha$
which plays important roles in many fields.
 Let $b$ be a locally integrable function on $\rz$, the commutator of
$b$ and $L^{-\alpha/2}$ is defined by
$$[b,L^{-\alpha/2}]f(x)=b(x)L^{-\alpha/2}f(x)-L^{-\alpha/2}(bf)(x).$$

For the special case of $L = -\Delta$,  many results have been
produced. Paluszy\'{n}ski \cite{Pa} obtained that $b\in
Lip_{\bz}(\rz)$ if the commutator $[b, I_{\az}]$ is bounded from
$L^p(\rz)$ to $L^r(\rz)$, where $1<p<r<\infty, 0<\bz<1$ and $1/p -
1/r = (\az+\bz)/n$ with $p < n/(\az+\bz)$. Shirai \cite{Sh1} proved
that $b\in Lip_{\bz}(\rz)$ if and only if the commutator  $[b,
I_{\az}]$ is bounded from the classical Morrey spaces
$L^{p,\lz}(\rz)$ to $L^{q,\lz}(\rz)$ for $1<p<q<\infty,\ 0<\az, \
0<\bz<1$ and $0<\az+\bz=(1/p-1/q)(n-\lz)<n$ or $L^{p,\lz}(\rz)$ to
$L^{q,\mu}(\rz)$ for  $1<p<q<\infty,\ 0<\az, \ 0<\bz<1,\
0<\az+\bz=(1/p-1/q)<n,\ 0<\lz<n-(\az+\bz)p$ and $\mu/q=\lz/p$.  Wang
\cite{WH} established some weighted boundedness of properties of
commutator $[b, I_\az]$ on the weighted Morrey spaces $L^{p,k}$
under appropriated conditions on the weight $\wz$, where the symbol
$b$ belongs to (weighted) Lipschitz spaces. The weighted Morrey
space was first introduced by Komori and Shirai \cite{ks}. For the
general case, Wang \cite{WH2} proved that if $b\in Lip_{\bz}(\rz)$,
then the commutator $[b, I_\az]$ is bounded from $L^p(\wz^p)$ to
$L^q(\wz^q)$, where $0<\bz<1,\ 0<\az+\bz<n, \ 1<p<n/(\az+\bz), 1/p -
1/q = (\az+\bz)/n$ and  $\wz^q\in A_1$.

The purpose of this paper is to give necessary and sufficient
conditions for boundedness of commutators of the general fractional
integrals with $b\in Lip_{\bz}(\omega)$ (the weighted Lipschitz
space). Our theorems are the following:

\begin{thm}\label{t3} Let $0<\beta<1$, $0<\alpha+\beta<n, 1<p<\frac{n}{\alpha+\beta}$, ${1}/{q}={1}/{p}-({\alpha+\beta})/{n},$
$0\leq k<\min\{{p}/{q},{p\bz}/{n}\}$ and $\omega^{q}\in
A_1$. Then we have
\par (a) If $b\in Lip_{\bz}(\rz), $ then $[b,L^{-\alpha/2}]$ is bounded from $L^{p,k}(\omega^p, \omega^q)$ to $L^{q,kq/p}(\omega^q)$;
\par (b) If $[b,L^{-\alpha/2}]$ is bounded from $L^{p,k}(\omega^p, \omega^q)$ to $L^{q,kq/p}(\omega^q)$, then $b\in
Lip_{\bz}(\rz)$.
\end{thm}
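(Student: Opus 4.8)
The plan is to prove the two implications by different mechanisms: (a) by dominating the commutator pointwise by a fractional integral of the shifted order $\alpha+\beta$ and then invoking a weighted Morrey bound for that operator, and (b) by a test-function argument on a pair of neighbouring cubes that reconstructs the Lipschitz oscillation of $b$ from the commutator.

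For part (a), I would first record the size of the kernel $K_\alpha(x,y)$ of $L^{-\alpha/2}$. From
$$K_\alpha(x,y)=\frac{1}{\Gamma(\alpha/2)}\int_0^\infty p_t(x,y)\,t^{\alpha/2-1}\,dt$$
and the Gaussian bound \eqref{G}, the substitution $t\mapsto |x-y|^2/t$ gives $|K_\alpha(x,y)|\le C|x-y|^{\alpha-n}$. Writing $[b,L^{-\alpha/2}]f(x)=\int_{\rz}(b(x)-b(y))K_\alpha(x,y)f(y)\,dy$ and using $|b(x)-b(y)|\le \|b\|_{Lip_\beta}|x-y|^\beta$, I obtain the pointwise domination
$$\big|[b,L^{-\alpha/2}]f(x)\big|\le C\|b\|_{Lip_\beta}\int_{\rz}\frac{|f(y)|}{|x-y|^{\,n-(\alpha+\beta)}}\,dy=C\|b\|_{Lip_\beta}\,I_{\alpha+\beta}(|f|)(x).$$
Since $1/q=1/p-(\alpha+\beta)/n$, the order $\alpha+\beta$ is exactly matched to the target exponent, so (a) reduces to the mapping property $I_{\alpha+\beta}\colon L^{p,k}(\omega^p,\omega^q)\to L^{q,kq/p}(\omega^q)$. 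I would establish this two-weight Morrey estimate from $\omega^q\in A_1$; note that since $p<q$ and $A_1$ is preserved under powers in $(0,1]$, one also has $\omega^p=(\omega^q)^{p/q}\in A_1$, so both normalisations are controlled, and the restriction $k<p/q$ is what makes the Morrey normalising powers consistent.

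For part (b), I would use the Campanato-type characterisation $\|b\|_{Lip_\beta}\approx \sup_Q |Q|^{-1-\beta/n}\int_Q|b-b_Q|\,dx$, so it suffices to bound the average on the right uniformly in $Q$. Fix $Q=Q(x_0,r)$ and pick a cube $\tilde Q$ of the same side length at distance $\approx r$ from $Q$. Testing the commutator against $f=\chi_{\tilde Q}$ and evaluating at $x\in Q$ gives
$$[b,L^{-\alpha/2}](\chi_{\tilde Q})(x)=\int_{\tilde Q}(b(x)-b(y))K_\alpha(x,y)\,dy,$$
and on $Q\times\tilde Q$ one has $|x-y|\approx r$; provided $K_\alpha$ is comparable to $r^{\alpha-n}$ with a fixed sign there, this expression is comparable to $r^{\alpha-n}|\tilde Q|\,(b(x)-b_{\tilde Q})$. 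Integrating over $Q$ and using the elementary bound $\int_Q|b-b_Q|\le 2\int_Q|b-c|\,dx$ with $c=b_{\tilde Q}$, the assumed boundedness of $[b,L^{-\alpha/2}]$ applied with $f=\chi_{\tilde Q}$ bounds $|Q|^{-1-\beta/n}\int_Q|b-b_Q|$. Converting the two Morrey normalisations of $\chi_{\tilde Q}$ and of the output back to this unweighted average is where $\omega^q\in A_1$, its reverse-H\"older inequality, and the sharper range $k<p\beta/n$ enter: these are exactly what force the surviving powers of $\omega^q(Q)$ and of $r$ to assemble into the $\beta/n$-homogeneity of the Lipschitz seminorm.

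The main obstacle is precisely the comparison used in (b): \eqref{G} provides only a Gaussian \emph{upper} bound, which suffices for the domination in (a) but not for the lower bound $K_\alpha(x,y)\gtrsim r^{\alpha-n}$ with controlled sign on $Q\times\tilde Q$ that drives the test-function argument. I would isolate this as a separate lemma, obtaining it either from a matching off-diagonal \emph{lower} bound on the heat kernel $p_t(x,y)$ (and, if the semigroup is positivity preserving, the sign of $K_\alpha$ is already fixed), or by expanding $|x-y|^{\alpha-n}$ in an absolutely convergent series on the off-diagonal region so that only the upper bound and the near-constancy of $K_\alpha$ are needed. The remaining work is bookkeeping: tracking the three distinct weight normalisations ($\omega^p$ in the domain integrand, $\omega^q$ in both Morrey measures) carefully enough that the constraint $k<\min\{p/q,\,p\beta/n\}$ emerges from the estimates rather than being imposed by hand.
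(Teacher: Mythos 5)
Your part (a) is correct, but it takes a genuinely different route from the paper. The paper never uses a pointwise domination of the commutator; it runs the sharp-maximal-function machinery: inequality \eqref{e2} for $M^{\sharp}_{L,\delta}$, the pointwise estimate of Lemma \ref{l04} in terms of $M_{\beta,1}(L^{-\alpha/2}f)$, $M_{\alpha+\beta,r}f$ and $M_{\alpha+\beta,1}f$, and then Lemmas \ref{l01}--\ref{l03}. Your shortcut is legitimate here precisely because in Theorem \ref{t3} the symbol lies in the \emph{unweighted} Lipschitz class, so (after the Campanato equivalence for $0<\beta<1$) one has $|b(x)-b(y)|\le C\|b\|_{Lip_\beta(\rz)}|x-y|^{\beta}$, while the kernel bound $|K_\alpha(x,y)|\le C|x-y|^{\alpha-n}$ follows from \eqref{G} exactly as you computed; the two-weight Morrey estimate for $I_{\alpha+\beta}$ that you then need is precisely the theorem of Komori--Shirai \cite{ks}, which the paper cites. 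Two remarks on the comparison: your route needs only $k<p/q$, whereas the extra restriction $k<p\beta/n$ in the statement is an artifact of the paper's splitting through $M_{\beta,1}(L^{-\alpha/2}f)$ and Lemma \ref{l03}; on the other hand, the paper's heavier machinery is not wasted, since it is exactly what is needed for Theorem \ref{t4}, where $b\in Lip_\beta(\omega)$ and no uniform pointwise bound on $b(x)-b(y)$ is available.

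Part (b) is where you have a genuine gap, and you diagnosed it yourself: the Gaussian \emph{upper} bound \eqref{G} cannot yield the lower bound $K_\alpha(x,y)\gtrsim r^{\alpha-n}$, a fixed sign, or any ``near-constancy'' of $K_\alpha$ on $Q\times\tilde Q$ --- the standing assumptions give no regularity of $p_t(x,y)$ in the space variables and no lower bounds at all. Both of your proposed repairs import hypotheses the theorem does not grant: off-diagonal heat-kernel lower bounds in one case, and enough kernel regularity to expand $1/K_\alpha$ in an absolutely convergent series in the other. The paper resolves (b) in a completely different (and much cheaper) way: it does not treat general $L$, $\omega$, $k$ at all, but specializes to $L=-\Delta$ (so $L^{-\alpha/2}=I_\alpha$ with explicit kernel $c|x-y|^{\alpha-n}$), $k=0$ and $\omega\equiv 1$, in which case the weighted Morrey spaces collapse to $L^p$ and $L^q$, and then quotes Paluszy\'nski \cite{Pa}: $(L^p,L^q)$-boundedness of $[b,I_\alpha]$ forces $b\in Lip_\beta(\rz)$. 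In effect the ``only if'' direction is read as quantified over the admissible class of operators and weights, so necessity only has to be verified in the model case. (The Janson-type Fourier-expansion you sketched as a fallback is essentially what the paper does for Theorem \ref{t4}(b), following \cite{Jan} --- but there too it first sets $L=-\Delta$, exactly because that argument needs the explicit kernel.) So to complete your proof you should either adopt the paper's specialization, or state explicitly that your version of (b) for a fixed general $L$ requires additional kernel hypotheses beyond \eqref{G}.
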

\begin{thm}\label{t4} Let $0<\beta<1$, $0<\alpha+\beta<n, 1<p<\frac{n}{\alpha+\beta}$, ${1}/{q}={1}/{p}-({\alpha+\beta})/{n},$
$0\leq k<{p}/{q},$ $\omega^{{q}/{p}}\in A_1$ and $ r_\omega>
\frac{1-k}{p/q-k},$  where $r_\omega$ denotes the critical index of
$\omega$ for the reverse H\"{o}lder condition. Then we have
\par (a) If $b\in Lip_{\bz}(\omega), $ then $[b,L^{-\alpha/2}]$ is bounded from $L^{p,k}(\omega)$ to $L^{q,kq/p}(\omega^{1-(1-\alpha/n)q},\omega)$;
\par (b) If $[b,L^{-\alpha/2}]$ is bounded from $L^{p,k}(\omega)$ to $L^{q,kq/p}(\omega^{1-(1-\alpha/n)q},\omega)$, then $b\in
Lip_{\bz}(\omega)$.
\end{thm}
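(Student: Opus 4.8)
The plan is to prove the two directions by different mechanisms: the sufficiency (a) reduces to the classical Riesz potential $I_{\alpha}$ through the pointwise upper bound on the kernel, whereas the necessity (b) must be argued against the operator $L^{-\alpha/2}$ itself, since the pointwise domination is one‑sided and gives no lower control. Throughout I use the García–Cuerva description $\|b\|_{Lip_{\beta}(\omega)}=\sup_{B}\omega(B)^{-1-\beta/n}\int_{B}|b(x)-b_{B}|\,dx$.

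For the sufficiency (a) I would start from the representation $K_{\alpha}(x,y)=\Gamma(\alpha/2)^{-1}\int_{0}^{\infty}p_{t}(x,y)\,t^{\alpha/2-1}\,dt$ and deduce from \eqref{G}, by the substitution $s=|x-y|^{2}/t$, that $|K_{\alpha}(x,y)|\le C|x-y|^{\alpha-n}$. Hence
$$\bigl|[b,L^{-\alpha/2}]f(x)\bigr|\le C\int_{\rz}|x-y|^{\alpha-n}\,|b(x)-b(y)|\,|f(y)|\,dy=:C\,T_{b}(|f|)(x),$$
so it suffices to bound the positive sublinear operator $T_{b}$ from $L^{p,k}(\omega)$ to $L^{q,kq/p}(\omega^{1-(1-\alpha/n)q},\omega)$. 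On each ball I would split $|b(x)-b(y)|\le|b(x)-b_{B}|+|b_{B}-b(y)|$ and use the elementary consequences of $b\in Lip_{\beta}(\omega)$, namely $|B|^{-1}\int_{B}|b-b_{B}|\lesssim\|b\|_{Lip_{\beta}(\omega)}\,\omega(B)^{1+\beta/n}/|B|$ together with the control of $|b_{B}-b_{2^{j}B}|$ across dyadic dilates. Combining these with the weighted fractional‑integral estimates on $B$ and on the annuli $2^{j+1}B\setminus 2^{j}B$, the hypotheses $\omega^{q/p}\in A_{1}$ and $r_{\omega}>\tfrac{1-k}{p/q-k}$ are exactly what make the resulting geometric series in $j$ converge; summing gives (a), in parallel with Komori–Shirai and Wang \cite{WH}.

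For the necessity (b) I cannot replace $L^{-\alpha/2}$ by $I_{\alpha}$, so I would test the commutator directly. Fix $B=B(x_{0},r)$ and the translated companion ball $\tilde B=B(x_{0}+3re_{1},r)$, so that $r\le|x-y|\le 5r$ whenever $x\in B$, $y\in\tilde B$. Testing with $f=\chi_{\tilde B}$ gives, for $x\in B$, the identity
$$[b,L^{-\alpha/2}]\chi_{\tilde B}(x)=(b(x)-b_{\tilde B})\int_{\tilde B}K_{\alpha}(x,y)\,dy+\int_{\tilde B}K_{\alpha}(x,y)\,(b_{\tilde B}-b(y))\,dy.$$
The aim is to read $|b(x)-b_{\tilde B}|$ off the first term, to bound the second (error) term by the upper estimate $|K_{\alpha}|\lesssim r^{\alpha-n}$ times the oscillation of $b$ over $\tilde B$, and then to convert the resulting inequality into the weighted Lipschitz bound after replacing $b_{\tilde B}$ by $b_{B}$ via the standard telescoping between the comparable balls $B$ and $\tilde B$; the norms $\|\chi_{\tilde B}\|_{L^{p,k}(\omega)}$ and the target Morrey norm over $B$ are computed using $\omega^{q/p}\in A_{1}$ and the reverse‑Hölder index hypothesis, as in (a).

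The crux — and the step I expect to be the main obstacle — is that the first integral above must be bounded \emph{below}: I need $\int_{\tilde B}K_{\alpha}(x,y)\,dy\ge c\,r^{\alpha}$ uniformly for $x\in B$, i.e. a quantitative lower bound $K_{\alpha}(x,y)\gtrsim|x-y|^{\alpha-n}$ over the separated pair $B\times\tilde B$. For $I_{\alpha}$ this is automatic, the kernel being exactly $c|x-y|^{\alpha-n}$, but \eqref{G} furnishes only an upper estimate and no lower control at all, which is precisely why (b) cannot be reduced to the $I_{\alpha}$ case. To produce the lower bound I would use the non‑negativity of the heat kernel to restrict the defining integral to one time window $t\in[r^{2},4r^{2}]$,
$$K_{\alpha}(x,y)\ge\frac{1}{\Gamma(\alpha/2)}\int_{r^{2}}^{4r^{2}}p_{t}(x,y)\,t^{\alpha/2-1}\,dt,$$
and then invoke a near‑diagonal lower estimate $p_{t}(x,y)\ge c\,t^{-n/2}$ valid when $|x-y|^{2}\le C t$ (which holds on the window since $|x-y|\le 5r$ and $t\ge r^{2}$); integrating in $t$ reproduces $c\,r^{\alpha-n}$, hence $\int_{\tilde B}K_{\alpha}\gtrsim r^{\alpha}$. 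This near‑diagonal heat‑kernel lower bound is the genuine analytic input beyond \eqref{G}, available for the model divergence‑form and Schrödinger‑type operators underlying \cite{DY1}, and securing it — rather than the subsequent weight bookkeeping — is where the real work of the necessity direction lies. Once it is in hand, the error term is absorbed and the telescoping $b_{\tilde B}\to b_{B}$ is routine, which completes (b) and the theorem.
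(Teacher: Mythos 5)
Your part (a) is a genuinely different route from the paper's, and a viable one: the pointwise bound $|K_{\alpha}(x,y)|\leq C|x-y|^{\alpha-n}$ does follow from \eqref{G}, so dominating $|[b,L^{-\alpha/2}]f|$ by the positive operator $T_{b}(|f|)$ is legitimate for the sufficiency direction. The paper instead runs (a) through Martell's sharp maximal function: Lemma \ref{l15} gives the pointwise control of $M^{\sharp}_{L,\dz}([b,L^{-\alpha/2}]f)$ by $\omega(x)^{1+\beta/n}M_{\beta,1}(L^{-\alpha/2}f)$, $\omega(x)^{1-\alpha/n}M_{\alpha+\beta,r,\omega}f$ and $\omega(x)^{1+\beta/n}M_{\alpha+\beta,1}f$, and then the good-$\lambda$ inequality \eqref{e1} together with Lemmas \ref{l1}--\ref{l5} finishes. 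Your direct ball-plus-annuli estimate of $T_{b}$ would in effect have to re-derive exactly those maximal-function lemmas (this is where $\omega^{q/p}\in A_{1}$ and $r_{\omega}>\frac{1-k}{p/q-k}$ enter), so as a sketch it is plausible but carries the same analytic burden by other means.

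The genuine gaps are in your part (b), and there are two. First, your scheme requires positivity of $p_{t}$ and a near-diagonal lower bound $p_{t}(x,y)\geq ct^{-n/2}$; as you yourself note, neither follows from the upper bound \eqref{G}, and neither is among the theorem's hypotheses, so what you outline is not a proof of the stated theorem but of a variant under extra assumptions. The paper never faces this issue because its proof of (b) specializes to $L=-\Delta$, where $L^{-\alpha/2}=I_{\alpha}$ has the exact kernel $c|x-y|^{\alpha-n}$, and then runs Janson's Fourier-expansion argument. Second, and more fatally, even granting your kernel lower bound the absorption step fails. Integrating your identity over $B$ gives
\begin{equation*}
c\,r^{\alpha}\int_{B}|b(x)-b_{\tilde B}|\,dx\;\leq\;\int_{B}\bigl|[b,L^{-\alpha/2}]\chi_{\tilde B}(x)\bigr|\,dx\;+\;C\,r^{\alpha}\int_{\tilde B}|b(y)-b_{\tilde B}|\,dy,
\end{equation*}
and after dividing by $\omega(B)^{1+\beta/n}$ (with $\omega(B)\approx\omega(\tilde B)$ by doubling) the error term is a multiple $C/c\geq 1$ of the very supremum $\sup_{B}\omega(B)^{-1-\beta/n}\int_{B}|b-b_{B}|$ you are trying to bound: both kernel estimates on the separated pair $B\times\tilde B$ hold only with absolute constants, so there is no small parameter, and the inequality $\sup\Phi\leq C\|[b,L^{-\alpha/2}]\|+C'\sup\Phi$ with $C'\geq1$ yields nothing. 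This is precisely what Janson's device is designed to avoid: inserting $1=|x-y|^{n-\alpha}|x-y|^{\alpha-n}$, expanding the smooth factor $|x-y|^{n-\alpha}$ in an absolutely convergent Fourier series on a ball away from the origin, and using the sign function $s(x)$ converts the whole oscillation $\int_{B}|b-b_{B_{Z_0}}|$ into pairings of $[b,I_{\alpha}]$ against modulated characteristic functions, with no remainder term at all. To salvage your route you would need either that expansion trick or a median-type selection of sub-balls on which $b-m$ has a definite sign (so that $|b(x)-b(y)|$ adds rather than cancels); the claim that ``the error term is absorbed \ldots is routine'' is where the argument breaks.
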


Our results not only extend the results of \cite{WH} from $(-\triangle)$ to a general operator $L$,
but also characterize
  the (weighted) Lipschitz spaces by means of the boundedness of $[b, L^{-\az/2}]$ on the weighted Morrey
  spaces, which extend the results of \cite{WH} and \cite{WH2}.
 The basic tool is based on a modification of sharp maximal function
  $M^\sharp_{L}$ introduced by \cite{Mar}.

  Throughout this paper all notation is standard or will be defined as
needed.  Denote the Lebesgue measure of $B$ by $|B|$ and the weighted
measure of $B$ by $\wz(B)$, where $\wz(B)=\int_B \wz(x)dx$. For a measurable set
$E$, denote by $\chi_E$ the characteristic function of $E$. For a real number $p$, $1<p<\infty$,
let $p'$ be the dual of $p$ such that $1/p+1/{p'}=1$. The letter $C$ will be used for various
constants, and may change from one occurrence to another.
\section{Some preliminaries}

A non-negative function $\wz$ defined on $\rz$ is called weight if it is locally integral. A weight $\wz$ is said to belong to
the Muckenhoupt class $A_p(\rz)$ for $1<p<\infty$, if there exists a constant $C$ such that
\begin{equation*}
   \left(\frac{1}{|B|}\int_B\omega(x)dx\right)\left(\frac{1}{|B|}\int_B\omega(x)^{-\frac{1}{p-1}}dx\right)^{p-1}\leq C.
\end{equation*} for every ball $B\subset\rz$. The class $A_1(\rz)$ is defined replacing the above inequality by

\begin{equation*}
   \left(\frac{1}{|B|}\int_B\omega(x)dx\right)\leq C \essinf_{x\in B}\, \omega(x).
\end{equation*}When $p=\infty, \omega\in A_\infty,$ if there exist positive
constants $\delta$ and $C$ such that given a ball $B$ and $E$ is a
measurable subset of $B$, then
\begin{equation*}
   \frac{\omega(E)}{\omega(B)}\leq C\left(\frac{|E|}{|B|}\right)^\delta .
\end{equation*}

 A weight function $\omega$ belongs to $A_{p,q}$ for $1 < p < q <\infty$
if for every ball $B$ in $\rz$, there exists a positive constant $C$ which is
independent of $B$ such that
\begin{equation*}
   \left(\frac{1}{|B|}\int_B\omega(x)^qdx\right)^{\frac{1}{q}}\left(\frac{1}{|B|}\int_B\omega(x)^{-p'}dx\right)^{\frac{1}{p'}}\leq C.
\end{equation*}

From the definition of  $A_{p,q}$, we can get that \begin{equation}\label{e3}
\wz\in A_{p,q}\ if\ and \ only \ if\ \wz^q \in A_{1+q/{p'}}.
\end{equation}
Since $\wz^{q/p}\in A_1$, then by \eqref{e3}, we have $\wz^{1/p}\in A_{p,q}$.

A weight function $\omega$ belongs to the reverse H\"{o}lder class
$RH_r$ if there exist two constants $r > 1$ and $C > 0$ such that
the following reverse H\"{o}lder inequality
\begin{equation*}
   \left(\frac{1}{|B|}\int_B\omega(x)^rdx\right)^{\frac{1}{r}}\leq C   \frac{1}{|B|}\int_B\omega(x)dx
\end{equation*}
holds for every ball $B$ in $\rz$.

It is well known that if $\omega\in  A_p$ with $1 \leq p < \infty$,
then there exists $r > 1$ such that $\omega\in RH_r.$ It follows
from H\"{o}lder¡¯s inequality that $\omega \in  RH_r$ implies
$\omega \in RH_s$ for all $1 < s < r.$ Moreover, if $\omega\in RH_r,
r > 1,$ then we have $\omega\in RH_{r+\epsilon}$ for some $
\epsilon> 0.$ We thus write $r_w= \sup\{r > 1 : \omega \in RH_r\}$
to denote the critical index of $\omega$ for the reverse H\"{o}lder
condition. For more details on Muchenhoupt class $A_{p,q}$, we refer
the reader to \cite{GC}, \cite{St1}  and  \cite{Tor}.

\begin{dfn}(\cite{ks}) \label{d1} Let $1\leq p<\infty$ and $0\leq k<1$. Then for
two weights $\mu$ and $\nu$, the weighted Morrey space is defined by
\begin{equation*}
    L^{p,k}(\mu,\nu)=\{f\in L_{loc}^p(\mu): \|f\|_{L^{p,k}(\mu,\nu)}<\infty \},
\end{equation*}
where
\begin{equation*}
  \|f\|_{L^{p,k}(\mu, \nu)}=\sup_B\left( \frac{1}{\nu(B)^k}\int_B |f(x)|^p\mu(x)dx\right)^{\frac{1}{p}}.
\end{equation*}
and the supremum is taken over all balls $B$ in $\rz$.
\end{dfn}

If $\mu=\nu,$ then we have the classical Morrey space $L^{p,k}(\mu)$
with measure $\mu$. When $k=0,$ then $L^{p,k}(\mu,\nu)=L^{p}(\mu)$
is the Lebesgue space with measure $\mu$.

\begin{dfn}(\cite{garcia}) \label{d7}Let $1\leq p<\infty$, $0<\beta<1$, and $\omega\in A_{\infty}$.
  A locally integral function $b$ is said to be in $Lip^p_\beta(\omega)$ if
 \begin{equation*}
 \|b\|_{Lip^p_\beta(\omega)}=\sup_B\frac{1}{\omega(B)^{\beta/n}}
 \left( \frac{1}{\omega(B)}\int_B|b(x)-b_B|^p\omega(x)^{1-p}dx\right)^{\frac{1}{p}}\leq C<\infty,
\end{equation*}where $b_B={|B|^{-1}}\int_B b(y)dy$ and the supremum is taken
over all ball $B\subset R^n.$ When $p=1,$ we denote   $Lip^p_\beta(\omega)$ by $Lip_\beta(\omega).$
  \end{dfn}
Obviously, for the case $\wz=1$, then the $Lip^p_{\bz}(\wz)$ space is the classical $Lip^p_\bz$ space.
 \begin{rmk} \label{r1} Let  $\omega \in A_1$, Garc\'{i}a-Cuerva \cite{garcia}
 proved that the spaces $\|f\|_{Lip^p_\beta(\omega)}$ coincide, and the norm of $||\cdot||_{Lip^p_\beta(\omega)}$
  are equivalent with respect to different values of provided that $1\leq p<\infty.$
\end{rmk}

Given a locally integrable function $f$  and $\beta$, $0\leq \bz<n$, define the fractional maximal function by
 $$M_{\beta,r}f(x)=\sup_{x\in B}\left( \frac{1}{|B|^{1-{\beta r}/{n}}}\int_B |f(y)|^rdy\right)^{\frac{1}{r}},\q  r\geq 1,$$
when $0<\bz<n$. If $\bz=0$ and $r=1$, then $M_{0,\ 1}f=Mf$ denotes the usual Hardy-Littlewood maximal function.

   Let $\omega$ be a weight. The weighted maximal operator $M_\omega$ is defined by
  $$M_\omega f(x)=\sup_{x\in B}\frac{1}{\omega(B)}\int_B |f(y)|dy.$$
 The fractional weighted maximal operator $M_{\beta,r,\omega}$ is defined by
  $$M_{\beta,r,\omega}f(x)=\sup_{x\in B}\left( \frac{1}{\omega(B)^{1-{\beta r}/{n}}}
  \int_B |f(y)|^r\omega(y)dy\right)^{\frac{1}{r}},$$
where $0\leq \bz<n$ and $r\geq 1$.  For any $f\in L^p(\rz),\
 p\geq 1,$ the sharp maximal function $M^{\sharp}_Lf$ associated
  the generalized approximations to the identity $\{e^{-tL},\ t>0\}$ is given by Martell \cite{Mar} as follows:
 \begin{equation*}
   M^{\sharp}_Lf(x)=\sup_{x\in B} \frac{1}{|B|}\int_B |f(y)-e^{-t_B L}f(y)|dy,
\end{equation*}
 where $t_B=r^2_B$ and $r_B$ is the radius of the ball $B$.
 For $0<\dz<1$,
we introduce the $\dz-$sharp maximal operator $M_{L,\dz}^\sharp$ as
$$M_{L,\dz}^\sharp f=M_{L}^\sharp(|f|^\dz)^{1/\dz},$$
which is a modification of the sharp maximal operator $M^{\sharp}$
of Fefferman and Stein (\cite{St1}). Set
$M_{\dz}f=M(|f|^\dz)^{1/\dz}$. Using the same methods as those of
\cite{St1} and \cite{P1}, we can get

\begin{lem}\label{l001} Assume that the semigroup $e^{-tL}$ has a kernel $p_t(x,y)$ which satisfies
the upper bound \eqref{G}.  Let $\lz>0$ and $f\in L^p(\rz)$ for some $1<p<\infty$. Suppose that $\omega \in A_\infty$, then for every
  $0<\eta<1$, there exists a real number $\gamma>0$ independent of $\gamma, \ f$ such that  we have the
following weighted version of the local good $\lambda$ inequality,
for $\eta>0$, $A>1$,
 $$\omega\{x\in \rz: M_{\dz} f>A \lambda, M_{L,\dz}^{\sharp}f(x)\leq \gamma\lambda\}\leq \eta \omega\{x\in \rz: M_{\dz}f(x)>\lambda\}.$$
 where $A>1$ is a fixed constant which depends only on $n$.
\end{lem}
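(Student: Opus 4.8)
The plan is to run the classical Fefferman--Stein good-$\lambda$ scheme, adapted to the semigroup sharp maximal function and upgraded to the weighted setting via the $A_\infty$ property. Write $g=|f|^{\dz}$, so that $M_{\dz}f(x)^{\dz}=Mg(x)$ and $\big(M_{L,\dz}^{\sharp}f(x)\big)^{\dz}=M_L^{\sharp}g(x)$. The set $\Omega=\{x\in\rz:M_{\dz}f(x)>\lambda\}=\{Mg>\lambda^{\dz}\}$ is open and, since $f\in L^p$, proper and of finite measure; perform a Whitney decomposition $\Omega=\bigcup_j Q_j$ into cubes with pairwise disjoint interiors and $\dist(Q_j,\Omega^c)\approx\mathrm{diam}(Q_j)$. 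It then suffices to prove the \emph{unweighted} local estimate
\begin{equation*}
\big|\{x\in Q_j:M_{\dz}f(x)>A\lambda,\ M_{L,\dz}^{\sharp}f(x)\le\gamma\lambda\}\big|\le C\gamma^{\dz}|Q_j|
\end{equation*}
for each $j$, with $C,A$ depending only on $n$ and $\dz$. Granting this, the $A_\infty$ hypothesis on $\wz$ gives $\wz(E)/\wz(Q_j)\le C(|E|/|Q_j|)^{\sz}$ for the bad set $E\subset Q_j$ and some $\sz>0$, whence $\wz(E)\le C\gamma^{\dz\sz}\wz(Q_j)$; choosing $\gamma$ small so that $C\gamma^{\dz\sz}\le\eta$ and summing over the disjoint $Q_j$ yields the assertion, since $\sum_j\wz(Q_j)=\wz(\Omega)$.

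For the local estimate, fix $Q_j$ and assume the bad set is nonempty, choosing $z_j\in Q_j$ with $M_{L,\dz}^{\sharp}f(z_j)\le\gamma\lambda$. The Whitney property furnishes $\bar x_j\in\Omega^c$ with $\dist(\bar x_j,Q_j)\le C\,\mathrm{diam}(Q_j)$, so $Mg(\bar x_j)\le\lambda^{\dz}$. Let $\widetilde Q_j$ be a fixed dilate of $Q_j$ containing $\bar x_j$ and every ball $B\ni x$, $x\in Q_j$, of radius $\le c\,\ell(Q_j)$, and let $B^{*}\supset\widetilde Q_j$ be a comparable ball with $z_j\in B^{*}$. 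For $x\in Q_j$ split the supremum defining $Mg(x)$ according to the radius of the competing ball $B\ni x$: if $r_B>c\,\ell(Q_j)$ then $B$ sits inside a ball about $\bar x_j$ of comparable measure, so its average of $g$ is $\le C\,Mg(\bar x_j)\le C\lambda^{\dz}$; the remaining small balls lie in $\widetilde Q_j$. Hence $Mg(x)\le\max\{C\lambda^{\dz},\,M(g\chi_{\widetilde Q_j})(x)\}$, and fixing $A$ with $A^{\dz}>C$ gives $\{x\in Q_j:M_{\dz}f>A\lambda\}\subset\{M(g\chi_{\widetilde Q_j})>(A\lambda)^{\dz}\}$.

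It remains to estimate this level set through the semigroup. Write $g=(g-e^{-t_{B^*}L}g)+e^{-t_{B^*}L}g=:G_1+G_2$ with $t_{B^*}=r_{B^*}^{2}$. The heart of the matter is the pointwise bound $G_2(y)\le C\lambda^{\dz}$ for $y\in\widetilde Q_j$: by the Gaussian upper bound \eqref{G}, $e^{-t_{B^*}L}g(y)$ is dominated by a Gaussian average of $g$ at scale $r_{B^*}\approx\ell(Q_j)$, which, decomposed into dyadic annuli and using the rapid decay of the Gaussian together with $Mg(\bar x_j)\le\lambda^{\dz}$, sums to $\le C\lambda^{\dz}$. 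I expect this step --- controlling the ``smooth'' part $e^{-t_{B^*}L}g$ on $\widetilde Q_j$ purely from \eqref{G}, in place of the trivial bound on the subtracted constant in the classical argument --- to be the main obstacle, since it is exactly where the absence of regularity of $p_t(x,y)$ must be absorbed. Granting it, for $A$ large $M(G_2\chi_{\widetilde Q_j})\le C\lambda^{\dz}<\tfrac12(A\lambda)^{\dz}$, so on the level set $M(G_1\chi_{\widetilde Q_j})(x)>\tfrac12(A\lambda)^{\dz}$; the weak $(1,1)$ bound for $M$ then gives
\begin{equation*}
\big|\{M(g\chi_{\widetilde Q_j})>(A\lambda)^{\dz}\}\big|\le\frac{C}{(A\lambda)^{\dz}}\int_{\widetilde Q_j}|g-e^{-t_{B^*}L}g|\le\frac{C|B^*|}{(A\lambda)^{\dz}}\,M_L^{\sharp}g(z_j),
\end{equation*}
where the last step uses $\widetilde Q_j\subset B^*$, $z_j\in B^*$ and the definition of $M_L^{\sharp}$. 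Since $M_L^{\sharp}g(z_j)\le(\gamma\lambda)^{\dz}$ and $|B^*|\approx|Q_j|$, this is $\le C(\gamma/A)^{\dz}|Q_j|$, which is the required local estimate and completes the argument.
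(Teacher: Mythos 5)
Your proof is correct and follows exactly the route the paper intends: the paper offers no proof of this lemma at all, saying only that it follows ``using the same methods as those of \cite{St1} and \cite{P1}'' --- i.e., the Fefferman--Stein/P\'erez good-$\lambda$ scheme adapted to the semigroup sharp maximal function as in Martell \cite{Mar} --- and that is precisely the Whitney-decomposition argument you carry out (unweighted local estimate on each Whitney cube, then the $A_\infty$ comparison property and summation). In particular, the step you flag as the main obstacle --- bounding $e^{-t_{B^*}L}g$ on $\widetilde Q_j$ by $C\lambda^{\dz}$ using only the Gaussian bound \eqref{G}, dyadic annuli, and $Mg(\bar x_j)\le\lambda^{\dz}$ --- is exactly the correct replacement for the trivial constant-subtraction step of the classical proof, and it goes through as you sketch.
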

If $\mu,\nu\in A_\infty, 1<p<\infty, 0\leq k<1$, then
 \begin{equation}\label{e1}\|f\|_{L^{p,k}(\mu,\nu)}\leq \|M_{\dz}f\|_{L^{p,k}(\mu,\nu)}\leq C\|M^{\sharp}_{_L,
 \dz}f\|_{L^{p,k}(\mu,\nu)}.
 \end{equation}
 In particular, when $\mu=\nu=\omega$ and $\omega \in A_\infty,$ we have
 \begin{equation}\label{e2}\|f\|_{L^{p,k}(\omega)}\leq \|M_{\dz}f\|_{L^{p,k}(\omega)}\leq C\|M^{\sharp}_{_L,
 \dz}f\|_{L^{p,k}(\omega)}.
  \end{equation}

\section{proof of theorem \ref{t3}}
 To prove Theorem \ref{t3}, we need the following lemmas.

\begin{lem}\label{l6} (\cite{DY1}) Assume that the semigroup $e^{-tL}$ has  a kernel $p_t(x,y)$ which satisfies the upper bound \eqref{G}.
Then for $0<\alpha <1,$ the difference operator $L^{-\frac{\alpha}{2}}-e^{-tL}L^{-\frac{\alpha}{2}}$
has an associated kernel $K_{\alpha,t}(x,y)$ which satisfies
$$K_{\alpha,t}(x,y)\leq \frac{C}{|x-y|^{n-\alpha}}\frac{t}{|x-y|^2}.$$
\end{lem}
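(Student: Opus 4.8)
The plan is to compute the kernel $K_{\az,t}$ explicitly from the subordination formula and then reduce its estimate to a time-derivative bound for the heat kernel. Writing out the definition of $L^{-\az/2}$ and using the Chapman--Kolmogorov (semigroup) identity $\int_{\rz} p_t(x,z)p_s(z,y)\,dz = p_{t+s}(x,y)$, the operator $e^{-tL}L^{-\az/2}$ has kernel $\frac{1}{\Gamma(\az/2)}\int_0^\infty p_{s+t}(x,y)\,s^{\az/2-1}\,ds$, so that
$$K_{\az,t}(x,y)=\frac{1}{\Gamma(\az/2)}\int_0^\infty\big[p_s(x,y)-p_{s+t}(x,y)\big]\,s^{\az/2-1}\,ds.$$
Everything is thereby reduced to controlling the difference $p_s-p_{s+t}$ of heat kernels at nearby times.

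Next I would record a time-derivative version of the Gaussian bound. Since $e^{-tL}$ is an analytic semigroup whose kernel satisfies \eqref{G}, the kernel of $tLe^{-tL}=-t\,\p_t e^{-tL}$ again enjoys a Gaussian estimate; equivalently there is a constant so that $|\p_\tau p_\tau(x,y)|\le C\tau^{-n/2-1}e^{-c|x-y|^2/\tau}$ for all $\tau>0$. This is the standard consequence of analyticity together with \eqref{G} and may be quoted from \cite{DY1}. With it, the fundamental theorem of calculus gives $|p_s(x,y)-p_{s+t}(x,y)|\le C\int_s^{s+t}\tau^{-n/2-1}e^{-c|x-y|^2/\tau}\,d\tau$, which already exhibits the factor $t$ arising from the length of the interval $[s,s+t]$.

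Setting $d=|x-y|$, I would then split the $s$-integral at $s=d^2$. On the far part $s>d^2$ one has $\tau\ge s>d^2$, so the Gaussian factor is harmless ($e^{-cd^2/\tau}\le1$) and $\tau^{-n/2-1}\le s^{-n/2-1}$; this produces $Ct\int_{d^2}^\infty s^{\az/2-n/2-2}\,ds$, an integral that converges because $\az<1<n+2$ and evaluates to a constant times $t\,d^{\az-n-2}=t\,d^{-(n-\az+2)}$, exactly the claimed bound. On the near part $0<s\le d^2$ one must keep the exponential decay $e^{-cd^2/\tau}$ to absorb the singularity of $s^{\az/2-1}$ at the origin; after bounding the inner integral by $t$ times the supremum of $\tau\mapsto\tau^{-n/2-1}e^{-cd^2/\tau}$ over $[s,s+t]$ and integrating in $s$, the exponential dominates the polynomial and yields the same order $t\,d^{-(n-\az+2)}$.

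The main obstacle is precisely this near region $s\le d^2$: there the power $s^{\az/2-1}$ is non-integrably singular without the Gaussian weight, and the size of the interval $[s,s+t]$ must be weighed against $d^2$ (the cases $t\le d^2$ and $t>d^2$ behave differently), so the exponential decay has to be used quantitatively rather than merely bounded by $1$. Everything else is a routine convergence check using only $0<\az<1$ and the global Gaussian bound \eqref{G}.
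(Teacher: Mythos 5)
Your proof is correct and follows essentially the same route as the source of this lemma: the paper itself gives no proof (the statement is quoted from \cite{DY1}), and your argument---the subordination formula yielding $K_{\alpha,t}(x,y)=\frac{1}{\Gamma(\alpha/2)}\int_0^\infty[p_s(x,y)-p_{s+t}(x,y)]s^{\alpha/2-1}ds$, the Gaussian bound $|\partial_\tau p_\tau(x,y)|\le C\tau^{-n/2-1}e^{-c|x-y|^2/\tau}$ coming from analyticity of the semigroup, and the splitting of the $s$-integral at $|x-y|^2$---is exactly the standard proof in the cited reference. One remark: the near region is even easier than you anticipate, since no case distinction between $t\le|x-y|^2$ and $t>|x-y|^2$ is needed; the uniform bound $\sup_{\tau>0}\tau^{-n/2-1}e^{-c|x-y|^2/\tau}\le C|x-y|^{-n-2}$ already gives a contribution of at most $Ct|x-y|^{-n-2}\int_0^{|x-y|^2}s^{\alpha/2-1}ds=Ct|x-y|^{\alpha-n-2}$, which is the claimed estimate.
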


\begin{lem}\label{l01}(\cite{WH}) Let $0<\az+\bz<n$, $1<p<n/{(\az+\bz)},
\ 1/q=1/p-(\az+\bz)/n$ and $\wz\in A_1$. Then for every $0<k<p/q$
and $1<r<p$, we have
$$\|M_{\az+\bz, r}f\|_{L^{q, kq/p}(\wz^q)}\leq C\|f\|_{L^{p,q}(\wz^p, \wz^q)}.$$
\end{lem}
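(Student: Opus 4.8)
The plan is to fix an arbitrary ball $B=B(x_0,r_B)\subset\rz$ and control the localized quantity
\[
\left(\frac{1}{\wz^q(B)^{kq/p}}\int_B (M_{\az+\bz,r}f)^q\wz^q\,dx\right)^{1/q}
\]
(here the source space is $L^{p,k}(\wz^p,\wz^q)$ in the notation of Definition \ref{d1}) by splitting $f=f_1+f_2$ with $f_1=f\chi_{2B}$ and $f_2=f\chi_{\rz\setminus 2B}$, and using subadditivity $M_{\az+\bz,r}f\le M_{\az+\bz,r}f_1+M_{\az+\bz,r}f_2$. A convenient preliminary reduction is the pointwise identity $M_{\az+\bz,r}g=\bigl(M_{(\az+\bz)r,\,1}(|g|^r)\bigr)^{1/r}$, which recasts the problem in terms of the ordinary ($r=1$) fractional maximal function of order $\gamma=(\az+\bz)r<n$ at the exponents $P=p/r$ and $Q=q/r$, which satisfy $1/Q=1/P-\gamma/n$.

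For the local part I would invoke the Muckenhoupt--Wheeden weighted strong-type bound $M_\gamma\colon L^{P}(\wz^p)\to L^{Q}(\wz^q)$. The relevant weight is $\wz^r$, and by an identity of the type \eqref{e3} its membership in $A_{P,Q}$ is equivalent to $\wz^q$ lying in a Muckenhoupt class $A_{1+Q/P'}$, which follows from $\wz\in A_1$ together with the reverse H\"older inequality that every $A_1$ weight enjoys. Granting this,
\[
\left(\int_{\rz}(M_{\az+\bz,r}f_1)^q\wz^q\right)^{1/q}\le C\left(\int_{2B}|f|^p\wz^p\right)^{1/p}\le C\,\|f\|_{L^{p,k}(\wz^p,\wz^q)}\,\wz^q(2B)^{k/p},
\]
and dividing by $\wz^q(B)^{k/p}$ and using that $\wz^q\in A_\infty$ is doubling (so $\wz^q(2B)\le C\wz^q(B)$) closes this piece immediately.

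For the global part I would use that every ball realizing $M_{\az+\bz,r}f_2(x)$ for $x\in B$ must meet $\rz\setminus 2B$ and hence has radius $\gtrsim r_B$; summing over the dyadic annuli centered at $x_0$ yields the $x$-uniform bound
\[
M_{\az+\bz,r}f_2(x)\le C\sum_{j\ge1}\left(\frac{1}{|2^{j}B|^{\,1-(\az+\bz)r/n}}\int_{2^{j}B}|f|^r\,dy\right)^{1/r}.
\]
Applying H\"older with exponents $p/r$ and $(p/r)'$ to bring in $\wz^p$, I bound each integral by $\|f\|_{L^{p,k}(\wz^p,\wz^q)}\,\wz^q(2^{j}B)^{k/p}$ times a factor $\bigl(\int_{2^{j}B}\wz^{-rp/(p-r)}\bigr)^{(p-r)/(rp)}$, and then use $\wz\in A_1$ to replace this last factor by a power of $|2^{j}B|/\wz(2^{j}B)$. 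After collecting exponents (the powers of $|2^{j}B|$ combine to $1-1/q$ by the relation $1/q=1/p-(\az+\bz)/n$), integrating over $B$ against $\wz^q$ and dividing by $\wz^q(B)^{kq/p}$, the entire global contribution reduces to a single geometric series in $j$.

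The main obstacle is precisely the convergence of this series. I expect the decisive ingredients to be the $A_\infty$ doubling/reverse-doubling comparisons of $\wz^q(2^{j}B)$ with $\wz^q(B)$, the $A_1$ lower bounds $\wz(2^{j}B)\approx|2^{j}B|\,\essinf_{2^{j}B}\wz$, and a reverse H\"older inequality used to compare $\int_{2^{j}B}\wz^q$ across scales. The hypothesis $0<k<p/q$ is exactly what forces the resulting exponent of $|B|/|2^{j}B|\sim 2^{-jn}$ to be strictly positive, so that the series sums to a constant independent of $B$ and $f$. Checking that the numerology of these competing powers leaves a strictly positive decay rate, uniformly in $B$, is the step that needs the most care; the local part, by contrast, is routine once the Muckenhoupt--Wheeden bound is in hand.
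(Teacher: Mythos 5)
First, a point of reference: the paper never proves this lemma at all --- it is imported verbatim from \cite{WH}, so the only ``proof'' in the paper is the citation. Your opening reduction, the identity $M_{\az+\bz,r}f=\bigl(M_{(\az+\bz)r,1}(|f|^r)\bigr)^{1/r}$ with $P=p/r$, $Q=q/r$, $\gamma=(\az+\bz)r$, is exactly the right (and the standard) first move; but after it you do not need any hand-made local/global splitting, because the statement you then need for $M_{\gamma,1}$ on weighted Morrey spaces is precisely Lemma \ref{l0002} (Komori--Shirai), already quoted in this paper, applied to $|f|^r$ with weight $\wz^r$ and Morrey index $k$ (one checks $kQ/P=kq/p$ and that the restriction $0<k<P/Q=p/q$ is the stated one). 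Your decomposition into $f_1+f_2$ amounts to re-proving that lemma from scratch, which is legitimate but not where the difficulty lies.

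The genuine gap is your weight-class step, and as written it is fatal. You assert that $\wz^q\in A_{1+Q/P'}$ (equivalently, via \eqref{e3}, $\wz^r\in A_{P,Q}$) ``follows from $\wz\in A_1$ together with the reverse H\"older inequality that every $A_1$ weight enjoys.'' It does not: an $A_1$ weight satisfies a reverse H\"older inequality only up to some finite exponent depending on the weight, and nothing forces that exponent to reach $q$. Concretely, $\wz(x)=|x|^{\epsilon-n}\in A_1$ for small $\epsilon>0$, yet $\wz^q=|x|^{(\epsilon-n)q}$ is not even locally integrable once $q\ge n/(n-\epsilon)$, hence belongs to no Muckenhoupt class; since $q\uparrow\infty$ as $p\uparrow n/(\az+\bz)$, this regime sits squarely inside the lemma's hypotheses. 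The same missing property contaminates your global term, where summing the geometric series requires $\wz^q(2^jB)\lesssim 2^{jn}\wz^q(B)$, i.e.\ again an $A_1$-type property of $\wz^q$, not of $\wz$. In fact the lemma as transcribed in this paper (with only $\wz\in A_1$) is false: with the weight above, $f=\chi_{B(e_1,1/2)}$, and balls $B_\delta=B(\delta e_1,\delta/2)$, one has $\int_{B_\delta}(M_{\az+\bz,r}f)^q\wz^q\gtrsim \wz^q(B_\delta)\to\infty$ as $\delta\to 0$, so the left-hand norm is infinite (using $1-kq/p>0$), while the right-hand norm is finite. The statement is a miscitation: what \cite{WH} assumes, and what is available at the only place the lemma is used here (the proof of Theorem \ref{t3}, where $\wz^q\in A_1$ is a standing hypothesis), is $\wz^q\in A_1$. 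Under that corrected hypothesis $\wz^q\in A_1\subset A_{1+Q/P'}$ trivially, your local part closes, and the series converges; but you would still need to carry out the exponent bookkeeping in the global part that you explicitly defer, since that is exactly where $0<k<p/q$ must enter.
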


\begin{lem}\label{l0002}(\cite{ks})  Let $0<\bz<n$, $1<p<n/{\bz},
\ 1/s=1/p-\bz/n$ and $\wz\in A_{p,s}$. Then for every
$0<k<p/s$, we have
$$\|M_{\bz, 1}f\|_{L^{s, ks/p}(\wz^s)}\leq C\|f\|_{L^{p,k}(\wz^p, \wz^s)}.$$
\end{lem}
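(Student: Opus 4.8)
The plan is to fix an arbitrary ball $B_0=B(x_0,r_0)$ and to control the local Morrey average
$$\left(\frac{1}{\wz^s(B_0)^{ks/p}}\int_{B_0}\big(M_{\bz,1}f(x)\big)^s\wz^s(x)\,dx\right)^{1/s}$$
by $\|f\|_{L^{p,k}(\wz^p,\wz^s)}$ with a constant independent of $B_0$; taking the supremum over all balls then gives the stated inequality. First I would split $f=f_1+f_2$ with $f_1=f\chi_{2B_0}$ and $f_2=f\chi_{\rz\setminus 2B_0}$, and estimate the two pieces separately, using the sublinearity $M_{\bz,1}f\le M_{\bz,1}f_1+M_{\bz,1}f_2$. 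Throughout I would use that $\wz\in A_{p,s}$ forces $\wz^s\in A_{1+s/p'}\subset A_\infty$ (the analogue of \eqref{e3} with $q=s$), so that $\wz^s$ is a doubling measure.

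For the local piece $f_1$ I would invoke the classical Muckenhoupt--Wheeden strong $(p,s)$ inequality: since $\wz\in A_{p,s}$ and $1/s=1/p-\bz/n$, the operator $M_{\bz,1}$ maps $L^p(\wz^p)$ boundedly into $L^s(\wz^s)$. Applying this to $f_1$, bounding $\int_{2B_0}|f|^p\wz^p\le\|f\|_{L^{p,k}(\wz^p,\wz^s)}^p\,\wz^s(2B_0)^k$ by the Morrey condition, and using the doubling of $\wz^s$ to replace $\wz^s(2B_0)$ by $\wz^s(B_0)$, I obtain $\int_{B_0}(M_{\bz,1}f_1)^s\wz^s\le C\|f\|_{L^{p,k}}^s\,\wz^s(B_0)^{ks/p}$, which is exactly the bound required for this piece.

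The global piece is where the real work lies. For $x\in B_0$, any ball $B\ni x$ meeting $\supp f_2$ must have radius $\gtrsim r_0$, whence $B_0\subset cB$ for a dimensional constant $c$. On each such $B$ I would write $|f|=(|f|\wz)\,\wz^{-1}$ and apply H\"older with exponents $p,p'$: the $L^p(\wz^p)$ factor is controlled by $\|f\|_{L^{p,k}}\wz^s(B)^{k/p}$ via the Morrey condition, while $\big(\int_B\wz^{-p'}\big)^{1/p'}$ is estimated through the $A_{p,s}$ condition by $C|B|^{1/p'+1/s}\wz^s(B)^{-1/s}$. The crucial point is that when these are inserted into $|B|^{-(1-\bz/n)}\int_B|f|$ the total power of $|B|$ collapses to zero precisely because $1/s=1/p-\bz/n$, leaving $M_{\bz,1}f_2(x)\le C\|f\|_{L^{p,k}}\wz^s(B)^{k/p-1/s}$. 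Since $k<p/s$ makes the exponent $k/p-1/s$ negative and $\wz^s(B)\gtrsim\wz^s(B_0)$ by doubling, this produces the pointwise bound $M_{\bz,1}f_2(x)\le C\|f\|_{L^{p,k}}\wz^s(B_0)^{k/p-1/s}$ for all $x\in B_0$. Integrating over $B_0$ against $\wz^s$ and dividing by $\wz^s(B_0)^{ks/p}$ makes every power of $\wz^s(B_0)$ cancel, giving the same bound as for $f_1$.

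I expect the main obstacle to be this global estimate, namely correctly balancing the three scaling-sensitive ingredients---the Morrey normalization $\wz^s(B)^{k/p}$, the $A_{p,s}$ bound on $\int_B\wz^{-p'}$, and the factor $|B|^{-(1-\bz/n)}$---so that the power of $|B|$ cancels exactly and the residual power of $\wz^s(B)$ comes out negative, which is what lets the doubling property pass from $B$ to $B_0$. The hypotheses $1/s=1/p-\bz/n$ and $0\le k<p/s$ are each used at exactly this point. The local estimate, by contrast, is routine once the classical weighted $(p,s)$ bound and the definition of the Morrey norm are in hand.
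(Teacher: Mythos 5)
Your proof is correct. Note that the paper does not prove this lemma at all—it simply cites Komori--Shirai \cite{ks}—and your argument (the $f=f\chi_{2B_0}+f\chi_{\rz\setminus 2B_0}$ splitting, the Muckenhoupt--Wheeden strong $(p,s)$ bound for the local piece, and H\"older plus the $A_{p,s}$ condition with the exact cancellation of the powers of $|B|$ via $1/s=1/p-\bz/n$ for the global piece) is essentially the proof given in that reference, so you have faithfully reconstructed the cited argument rather than found a different one.
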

\begin{lem}\label{l02} (\cite{WH}) Let $0<\az+\bz<n$, $1<p<n/{(\az+\bz)},
\ 1/q=1/p-\az/n$, $1/s=1/q-\bz/n$ and $\wz^q\in A_1$. Then for every
$0<k<p/s$, we have
$$\|M_{\bz, 1}f\|_{L^{s, ks/p}(\wz^s)}\leq C\|f\|_{L^{q,kq/p}(\wz^q, \wz^s)}.$$
\end{lem}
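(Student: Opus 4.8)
The plan is to deduce this statement directly from Lemma \ref{l0002}, which is already at our disposal, by re-indexing it with $q$ playing the role of the base exponent. Writing Lemma \ref{l0002} with base exponent $q$, target exponent $s$, fractional order $\beta$ (so that $1/s=1/q-\beta/n$, as assumed here) and Morrey parameter $K:=kq/p$, it asserts
$$\|M_{\beta,1}f\|_{L^{s,Ks/q}(\omega^s)}\le C\|f\|_{L^{q,K}(\omega^q,\omega^s)}$$
provided $1<q<n/\beta$, $0<K<q/s$ and $\omega\in A_{q,s}$. First I would check the numerology. From $1/q=1/p-\alpha/n$ and $1/p>(\alpha+\beta)/n$ one gets $1/q>\beta/n$, i.e. $q<n/\beta$, while $1/q<1/p<1$ gives $q>1$; the constraint $0<K=kq/p<q/s$ is exactly $0<k<p/s$; and the spaces match since $Ks/q=ks/p$ and $L^{q,K}=L^{q,kq/p}$. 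Thus, once $\omega\in A_{q,s}$ is known, the desired inequality is precisely the re-indexed Lemma \ref{l0002}.

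The one hypothesis that is not immediate, and which I expect to be the main point, is the membership $\omega\in A_{q,s}$, i.e.
$$\left(\frac{1}{|B|}\int_B\omega^s\right)^{1/s}\left(\frac{1}{|B|}\int_B\omega^{-q'}\right)^{1/q'}\le C$$
for every ball $B$. I would extract this from $\omega^q\in A_1$ as follows. On the one hand, the pointwise lower bound $\omega^q(y)\ge c\,\frac{1}{|B|}\int_B\omega^q$ valid a.e. on $B$ for $A_1$ weights yields $\omega^{-q'}(y)=(\omega^q)^{-q'/q}(y)\le\bigl(c\,\frac{1}{|B|}\int_B\omega^q\bigr)^{-q'/q}$, whence $\bigl(\frac{1}{|B|}\int_B\omega^{-q'}\bigr)^{1/q'}\le C\bigl(\frac{1}{|B|}\int_B\omega^q\bigr)^{-1/q}$. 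On the other hand, since $\omega^q\in A_1\subset A_\infty$ it satisfies a reverse Hölder inequality, and applying it with exponent $s/q$ gives $\bigl(\frac{1}{|B|}\int_B\omega^s\bigr)^{1/s}=\bigl(\frac{1}{|B|}\int_B(\omega^q)^{s/q}\bigr)^{1/s}\le C\bigl(\frac{1}{|B|}\int_B\omega^q\bigr)^{1/q}$. Multiplying the two estimates, the averages of $\omega^q$ cancel and the $A_{q,s}$ bound drops out; equivalently, via \eqref{e3}, one has produced $\omega^s\in A_{1+s/q'}$. The delicate point is the use of reverse Hölder at the precise exponent $s/q$: this is exactly the condition guaranteeing that $\omega^s$ is locally integrable (so that the target Morrey space is nondegenerate), and I would record it as the step to justify with care, invoking the self-improving reverse Hölder inequality for $A_1$ weights.

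If one prefers an argument that does not route through Lemma \ref{l0002}, the same result follows by the usual weighted Morrey decomposition: fix a ball $B_0$, split $f=f\chi_{2B_0}+f\chi_{\rz\setminus 2B_0}=:f_1+f_2$, and use the sublinearity of $M_{\beta,1}$. For the local term one invokes the Muckenhoupt--Wheeden strong-type estimate $M_{\beta,1}\colon L^q(\omega^q)\to L^s(\omega^s)$ (valid under the $\omega\in A_{q,s}$ established above) and then passes to the Morrey norm using that $f_1$ is supported in $2B_0$ and that $\omega^s$ is doubling. For the global term one estimates, for $x\in B_0$, $M_{\beta,1}f_2(x)\le C\sup_{j\ge1}|2^jB_0|^{\beta/n-1}\int_{2^jB_0}|f|$, bounds each average by Hölder against $\omega^q$ and the Morrey norm, and sums the resulting series in $j$; here the $A_1$ property of $\omega^q$ (doubling together with the lower bound used above) and the constraint $0<k<p/s$ are what make the geometric series converge and produce the correct factor $\omega^s(B_0)^{ks/p}$. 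I expect this tail summation to be the main computational obstacle in the self-contained route, mirroring the reverse Hölder step that is the crux of the reduction.
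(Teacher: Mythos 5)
Your reduction of Lemma \ref{l02} to Lemma \ref{l0002} is the natural route (the paper itself gives no proof, quoting the statement from \cite{WH}), and your numerology is all correct: $q>p>1$, $q<n/\bz$ follows from $p<n/(\az+\bz)$, $Ks/q=ks/p$, and $0<K<q/s$ is exactly $0<k<p/s$. The first half of your $A_{q,s}$ verification is also fine: the pointwise bound $\wz(y)^q\ge c\,|B|^{-1}\int_B\wz^q$ a.e.\ on $B$ does give $\bigl(|B|^{-1}\int_B\wz^{-q'}\bigr)^{1/q'}\le C\bigl(|B|^{-1}\int_B\wz^q\bigr)^{-1/q}$. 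But the step you flagged as delicate is a genuine gap, not a technicality: $\wz^q\in A_1$ does \emph{not} imply the reverse H\"older inequality for $\wz^q$ at the prescribed exponent $s/q=(1-\bz q/n)^{-1}$. The self-improving property you invoke only upgrades $RH_r$ to $RH_{r+\epsilon}$ for some small $\epsilon>0$; it cannot reach an arbitrary given exponent, and the critical index of an $A_1$ weight can be arbitrarily close to $1$. Concretely, take $n=1$, $\az=\bz=1/4$, $p=3/2$, so $q=12/5$, $s=6$, $s/q=5/2$; with $\wz^q=|x|^{-\gz}$, $2/5<\gz<1$, one has $\wz^q\in A_1$ but $\wz^s=|x|^{-5\gz/2}\notin L^1_{\loc}$, so $\wz\notin A_{q,s}$ and the space $L^{s,ks/p}(\wz^s)$ itself degenerates. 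Your alternative self-contained route inherits the same defect, since it invokes the Muckenhoupt--Wheeden bound ``under the $\wz\in A_{q,s}$ established above.''

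Note that the gap reflects the lemma's hypothesis as literally printed rather than your overall strategy. In the only place Lemma \ref{l02} is used (the proof of Theorem \ref{t3}), it is applied with the lemma's $s$ equal to the theorem's final exponent, and the standing assumption there is $A_1$ for the weight raised to that final exponent --- i.e.\ $\wz^s\in A_1$ in the lemma's notation, which also gives $\wz^q=(\wz^s)^{q/s}\in A_1$ since $q/s<1$. Under $\wz^s\in A_1$ your reduction closes immediately and with no reverse H\"older step at all: $\wz^s\in A_1\subset A_{1+s/q'}$, hence $\wz\in A_{q,s}$ by the re-indexed \eqref{e3}, and the re-indexed Lemma \ref{l0002} applies verbatim. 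So the repair is either to read the hypothesis as $\wz^s\in A_1$ (consistent with how the lemma is actually invoked) or to add a critical-index assumption of the form $r_{\wz^q}>s/q$, of the same kind as the $r_\wz$ condition appearing in Theorem \ref{t4}. As written, however, the claim that $\wz^q\in A_1$ alone yields $RH_{s/q}$ ``with care'' is not salvageable.
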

\begin{lem}\label{l03} Let $0<\az+\bz<n$, $1<p<n/{(\az+\bz)},
\ 1/q=1/p-\az/n$, $1/s=1/q-\bz/n$ and $\wz^q\in A_1$. Then for every
$0<k<p\bz/n$, we have
$$\|L^{-\alpha/2}f\|_{L^{q, kq/p}(\wz^q, \wz^s)}\leq C\|f\|_{L^{p,k}(\wz^p, \wz^s)}.$$
\end{lem}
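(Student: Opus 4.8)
The plan is to reduce the general fractional integral $L^{-\alpha/2}$ to the classical fractional integral $I_\alpha$ via a kernel size estimate, and then run the standard local/global (near/far) decomposition adapted to the weighted Morrey scale. First I would establish the pointwise domination $|L^{-\alpha/2}f(x)|\le C\,I_\alpha(|f|)(x)$. Writing the kernel of $L^{-\alpha/2}$ as $K_\alpha(x,y)=\frac{1}{\Gamma(\alpha/2)}\int_0^\infty p_t(x,y)\,t^{\alpha/2-1}\,dt$ and inserting the Gaussian bound \eqref{G}, the substitution $t\mapsto |x-y|^2/u$ turns the $t$-integral into a convergent Gamma integral (convergent precisely because $0<\alpha<n$), giving $|K_\alpha(x,y)|\le C|x-y|^{\alpha-n}$. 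Thus the whole estimate is reduced to the corresponding inequality for $I_\alpha$ applied to $|f|$; in particular Lemma \ref{l6} is not needed here, only the heat kernel bound itself.

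Since $\omega^q\in A_1\subset A_{1+q/p'}$, relation \eqref{e3} gives $\omega\in A_{p,q}$, so by the Muckenhoupt--Wheeden weighted inequality for fractional integrals (recall $1/q=1/p-\alpha/n$) the operator $I_\alpha\colon L^p(\omega^p)\to L^q(\omega^q)$ is bounded. Now fix a ball $B=B(x_0,r_B)$ and split $f=f_1+f_2$ with $f_1=f\chi_{2B}$ and $f_2=f\chi_{\rz\setminus 2B}$. For the local part I would use this weighted boundedness to obtain $(\int_B|L^{-\alpha/2}f_1|^q\omega^q)^{1/q}\le C(\int_{2B}|f|^p\omega^p)^{1/p}$; dividing by $\omega^s(B)^{k/p}$ and using that $\omega^s$ is doubling (which follows from the comparison $\omega^s(E)\approx|E|^{1-s/q}\omega^q(E)^{s/q}$ together with the doubling of $\omega^q$) bounds this piece by $C\|f\|_{L^{p,k}(\omega^p,\omega^s)}$.

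For the global part, for $x\in B$ and $y\notin 2B$ one has $|x-y|\approx|x_0-y|$, so $L^{-\alpha/2}f_2(x)$ is essentially a constant which I would estimate by the annular sum $\sum_{j\ge1}(2^jr_B)^{\alpha-n}\int_{B_j}|f|$ with $B_j=2^{j+1}B$. Applying H\"older with the pair $(p,p')$ against $\omega^p$, bounding $(\int_{B_j}|f|^p\omega^p)^{1/p}$ by $\|f\|_{L^{p,k}(\omega^p,\omega^s)}\,\omega^s(B_j)^{k/p}$, and estimating $(\int_{B_j}\omega^{-p'})^{1/p'}$ by the $A_{p,q}$ condition (which produces the factor $|B_j|^{1-\alpha/n}\omega^q(B_j)^{-1/q}$, exactly cancelling the kernel factor $(2^jr_B)^{\alpha-n}$ since $1/p'+1/q=1-\alpha/n$), the entire global contribution to the target norm is controlled by
\[
\|f\|_{L^{p,k}(\omega^p,\omega^s)}\sum_{j\ge1}\left(\frac{\omega^s(B_j)}{\omega^s(B)}\right)^{k/p}\left(\frac{\omega^q(B)}{\omega^q(B_j)}\right)^{1/q}.
\]

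The heart of the matter, and the step I expect to be the main obstacle, is the convergence of this series, which is exactly where the hypothesis $0<k<p\beta/n$ enters. Here I would use three weight facts: the $A_1$ upper bound $\omega^q(B_j)/\omega^q(B)\le C\,|B_j|/|B|$, the $A_\infty$ reverse-doubling lower bound $\omega^q(B_j)/\omega^q(B)\ge c\,(|B_j|/|B|)^{\delta}$, and the reverse-H\"older comparison $\omega^s(E)\approx|E|^{1-s/q}\omega^q(E)^{s/q}$. Substituting the comparison reduces each term to a power of $|B_j|/|B|=2^{(j+1)n}$ times a power of $\omega^q(B_j)/\omega^q(B)$ with exponent $(s/q)(k/p)-1/q$; choosing the $A_1$ bound when this exponent is nonnegative (which collapses the net exponent to $k/p-1/q$) and the reverse-doubling bound otherwise (which only makes it more negative), the net exponent is strictly negative because $1/q=1/p-\alpha/n>\beta/n$ and $k<p\beta/n$ force $k/p<\beta/n<1/q$. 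Hence the series is a convergent geometric series, and combining the local and global bounds and taking the supremum over $B$ yields the claim.
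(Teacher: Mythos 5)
Your first step (dominating $|L^{-\alpha/2}f|$ pointwise by $C\,I_\alpha(|f|)$ via the Gaussian bound \eqref{G}) is exactly what the paper does; the paper then finishes in one line by quoting the inequality $\|I_\alpha f\|_{L^{q,kq/p}(\omega^q,\omega^s)}\le C\|f\|_{L^{p,k}(\omega^p,\omega^s)}$ from \cite{WH}, whereas you attempt to prove it yourself. Your proof of that inequality has a genuine gap: the two-sided comparison $\omega^s(E)\approx|E|^{1-s/q}\,\omega^q(E)^{s/q}$, on which both halves of your argument rest (it is what you invoke for the doubling of $\omega^s$ in the local part, and it is the substitution that drives the series estimate in the global part). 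The direction $\omega^s(E)\ge c\,|E|^{1-s/q}\omega^q(E)^{s/q}$ does follow from $\omega^q\in A_1$, but the direction $\omega^s(E)\le C|E|^{1-s/q}\omega^q(E)^{s/q}$ is, after raising to the power $q/s$, precisely the statement $\omega^q\in RH_{s/q}$, a reverse H\"older inequality with the fixed exponent $s/q=n/(n-\beta q)>1$. This does not follow from $\omega^q\in A_1$: the critical index $r_{\omega^q}$ of an $A_1$ weight can be arbitrarily close to $1$. Concretely, take $\omega(x)^q=|x|^{-a}$ with $n-\beta q<a<n$; then $\omega^q\in A_1$, yet $\omega^s=|x|^{-as/q}$ with $as/q>n$ is not even locally integrable, so $\omega^s(B)=\infty$ for balls containing the origin while $|B|^{1-s/q}\omega^q(B)^{s/q}<\infty$. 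Thus the step you yourself single out as the heart of the matter is unjustified (and false in general) under the stated hypothesis; the rest of your argument (the kernel domination, the local/global split, the $A_{p,q}$ cancellation of $(2^jr_B)^{\alpha-n}$, and the bookkeeping $k/p<\beta/n<1/q$) is sound.

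The natural repair is to anchor the comparison on $\omega^s$ rather than on $\omega^q$, which requires the stronger hypothesis $\omega^s\in A_1$ --- and that is exactly what is available when this lemma is actually applied in the proof of Theorem \ref{t3}: there the lemma's $\omega^s$ is the theorem's $\omega^q\in A_1$, and then $\omega^q=(\omega^s)^{q/s}\in A_1$ automatically since $q/s<1$. With $\omega^s\in A_1$, H\"older's inequality gives $\omega^q(E)\le|E|^{1-q/s}\omega^s(E)^{q/s}$ for every ball with no weight hypothesis at all, and the $A_1$ property of $\omega^s$ gives the reverse inequality; your series term is then bounded by
$$C\left(\frac{\omega^s(B_j)}{\omega^s(B)}\right)^{k/p-1/s}\left(\frac{|B|}{|B_j|}\right)^{\beta/n},$$
and splitting on the sign of $k/p-1/s$ (using $\omega^s(B_j)\ge\omega^s(B)$ when it is $\le 0$, and the $A_1$ growth bound $\omega^s(B_j)/\omega^s(B)\le C|B_j|/|B|$ when it is $>0$, in which case the net exponent collapses to $k/p-1/q<0$) yields geometric decay. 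As written, however, with only $\omega^q\in A_1$ your proof does not close; the paper avoids the issue entirely by citing \cite{WH}.
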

\begin{proof}
As before, we know that $L^{-\alpha/2}f(x)\leq C
I_\alpha(|f|)(x) $ for all $x\in \rz.$ Together with the result (cf.
\cite{WH}), that is,
$$\|I_{\az}f\|_{L^{q, kq/p}(\wz^q, \wz^s)}\leq C\|f\|_{L^{p,k}(\wz^p, \wz^s)},$$
we can get the desired result.
\end{proof}
 \begin{rmk} \label{r2} Using the boundedness property of $I_{\az}$, we also know
 $L^{-\alpha/2}$ is bounded from $L^1$ to weak $L^{n/(n-\az)}$. It is easy to check that Lemma \ref{l01}-\ref{l03} also hold when $k=0$.
\end{rmk}
The following lemma plays an important role in the proof of Theorem
\ref{t3}.

\begin{lem} \label{l04} Let $0<\dz<1,\  0<\alpha <n, \ 0<\beta<1$
and $b\in Lip_\beta(\rz).$ Then for all $r>1$ and for all $x\in
\rz,$ we have
\begin{eqnarray*}
&&M^{\sharp}_{L,\dz}([b, L^{-\alpha/2}]f)(x)\\
&\leq & C \|b\|_{Lip_\beta(\rz)}
\left(M_{\beta,1}(L^{-\alpha/2}f)(x) +M_{\alpha+\beta,
r}f(x)+M_{\alpha+\beta, 1}f(x)\right).
\end{eqnarray*}
\end{lem}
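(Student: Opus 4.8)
The plan is to estimate the sharp maximal function pointwise by exploiting the standard decomposition of the commutator, writing for an arbitrary constant $\lambda$
\begin{equation*}
[b,L^{-\alpha/2}]f(x)=(b(x)-\lambda)L^{-\alpha/2}f(x)-L^{-\alpha/2}((b-\lambda)f)(x).
\end{equation*}
Fixing a ball $B\ni x$ with radius $r_B$ and center $x_0$, I would take $\lambda=b_B$ and split $f=f_1+f_2$ where $f_1=f\chi_{2B}$ and $f_2=f\chi_{(2B)^c}$. Since $M^\sharp_{L,\delta}$ is defined through $\frac{1}{|B|}\int_B||g|^\delta-e^{-t_BL}|g|^\delta|$ and we may use the elementary inequality $||a|^\delta-|c|^\delta|\le|a-c|^\delta$ for $0<\delta<1$, it suffices to control the mean oscillation of $[b,L^{-\alpha/2}]f$ against a suitable constant and against its heat-smoothed version $e^{-t_BL}$. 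Thus I would bound $\frac{1}{|B|}\int_B\big|[b,L^{-\alpha/2}]f(y)-c_B\big|^\delta\,dy$ where $c_B$ is chosen to absorb the contribution of $f_2$ through the semigroup.

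The three terms in the statement should arise as follows. First, the term $(b(y)-b_B)L^{-\alpha/2}f(y)$ is handled by H\"older's inequality on $B$: the Lipschitz condition gives $\frac{1}{|B|}\int_B|b(y)-b_B|^{\delta r'}dy\lesssim(\|b\|_{Lip_\beta}r_B^\beta)^\delta$, and the remaining factor produces $M_{\beta,1}(L^{-\alpha/2}f)(x)$ after recognizing that $r_B^\beta\big(\frac{1}{|B|}\int_B|L^{-\alpha/2}f|\big)\lesssim M_{\beta,1}(L^{-\alpha/2}f)(x)$. Second, the local piece $L^{-\alpha/2}((b-b_B)f_1)$ is estimated by using Lemma \ref{l03} (or the $L^r$ boundedness of $I_\alpha$) together with the Lipschitz bound on $2B$; after a H\"older splitting with exponent $r$ this yields the $M_{\alpha+\beta,r}f(x)$ term. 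Third, and this is where Lemma \ref{l6} is essential, the far piece requires comparing $L^{-\alpha/2}((b-b_B)f_2)(y)$ with its heat-regularized value $e^{-t_BL}L^{-\alpha/2}((b-b_B)f_2)(y)$; the difference operator has kernel $K_{\alpha,t}(x,y)$ satisfying $K_{\alpha,t_B}(y,z)\lesssim|y-z|^{-(n-\alpha)}\,t_B|y-z|^{-2}$, which furnishes the decay $r_B^2|x_0-z|^{-2}$ on the annuli $2^{j+1}B\setminus2^jB$ needed to sum a geometric series. Combined with $|b(z)-b_B|\lesssim\|b\|_{Lip_\beta}\,|z-x_0|^\beta$ on those annuli, this produces the $M_{\alpha+\beta,1}f(x)$ term.

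The principal obstacle will be the third term: one must carefully track the growth of $|b(z)-b_B|$ across dyadic annuli, where on $2^{j+1}B$ the oscillation grows like $\|b\|_{Lip_\beta}(2^jr_B)^\beta$ plus a term of the form $j\,\|b\|_{Lip_\beta}(2^jr_B)^\beta$ coming from telescoping $|b_{2^{j+1}B}-b_B|$, and then verify that the extra decay $t_B/|x_0-z|^2\sim 2^{-2j}$ from Lemma \ref{l6} dominates the combined polynomial growth $2^{j(\alpha+\beta)}$ from the kernel singularity, the annulus volume, and the Lipschitz factor, so that $\sum_j j\,2^{-2j}2^{j(\alpha+\beta)}$ converges under $\alpha+\beta<n$ (in fact $\alpha+\beta<2$ suffices for convergence, which holds here). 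Once this geometric summation is secured, choosing $c_B=e^{-t_BL}\big(L^{-\alpha/2}((b-b_B)f_2)\big)(x_0)$ makes the heat-smoothed comparison exact, and assembling the three contributions, each carrying the factor $\|b\|_{Lip_\beta(\rz)}$, gives the claimed pointwise bound for every $x\in\rz$.
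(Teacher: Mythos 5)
Your terms for $(b-b_B)L^{-\alpha/2}f$, for the local piece $L^{-\alpha/2}((b-b_B)f_1)$, and your annulus summation using Lemma \ref{l6} all correspond to genuine pieces of the paper's argument (its terms I, II and V in the proof of Lemma \ref{l15}, to which the proof of Lemma \ref{l04} is referred). But there is a structural gap: you treat $M^{\sharp}_{L,\delta}$ as if it were the classical Fefferman--Stein sharp function, reducing everything to the oscillation $\frac{1}{|B|}\int_B|[b,L^{-\alpha/2}]f(y)-c_B|^{\delta}dy$ against a constant $c_B$ of your choosing. Martell's operator does not allow this: by definition one must estimate $\frac{1}{|B|}\int_B|g(y)-e^{-t_BL}g(y)|^{\delta}dy$ with $g=[b,L^{-\alpha/2}]f$, where $e^{-t_BL}g(y)$ is a function of $y$, namely the semigroup applied to the \emph{whole} commutator. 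Consequently, besides your three terms, one must also estimate $e^{-t_BL}\bigl((b-b_B)L^{-\alpha/2}f\bigr)$ and $e^{-t_BL}L^{-\alpha/2}\bigl((b-b_B)f_1\bigr)$ on $B$ (the paper's terms III and IV). These are not absorbed by any choice of constant; in the paper they are controlled by a separate argument (the analogue of Lemma \ref{l14}) that expands the Gaussian kernel over dyadic annuli, and they contribute some of the maximal functions appearing on the right-hand side. Your proposal never addresses them.

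Moreover, your final step is incorrect as stated: choosing $c_B=e^{-t_BL}\bigl(L^{-\alpha/2}((b-b_B)f_2)\bigr)(x_0)$ does not make ``the heat-smoothed comparison exact.'' Lemma \ref{l6} bounds the kernel of $L^{-\alpha/2}-e^{-t_BL}L^{-\alpha/2}$, i.e.\ it compares $L^{-\alpha/2}h(y)$ with $e^{-t_BL}L^{-\alpha/2}h(y)$ \emph{at the same point} $y$. To pass from the value at $y$ to the value at the center $x_0$ you would need regularity of $p_t(\cdot,z)$ in the space variable, which is precisely what the standing hypothesis \eqref{G} does not provide --- this lack of regularity is the very reason the semigroup-adapted sharp function is used instead of the classical one. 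So either you keep $e^{-t_BL}L^{-\alpha/2}((b-b_B)f_2)(y)$ as a function of $y$ (in which case your comparison object is no longer a constant and you are forced into the paper's five-term decomposition), or your estimate of the far part collapses. A smaller point: in the convergence check, the growth $2^{j(\alpha+\beta)}$ should be absorbed into the normalization of $M_{\alpha+\beta,1}$ over $2^{j+1}B$, leaving $\sum_j j\,2^{-2j}$, which converges unconditionally; your condition $\alpha+\beta<2$ is an artifact of not doing this and would not cover the full range $0<\alpha<n$ claimed in the lemma.
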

The same method of proof as that of Lemma \ref{l15} (see below), we
omit the details.

\textit{{Proof of Theorem \ref{t3}.}} We first prove $(a)$.  We only prove Theorem \ref{t3}
in the case $0<\az<1$. For the general case $0<\az<n$, the method is the same as that of \cite{DY1}. We omit
the details.

For $0<\az+\bz<n$ and $1<p<n/(\az+\bz)$, we can find a number $r$
such that $1<r<p$. By Eq.\eqref{e2} and Lemma \ref{l04}, we obtain
{\allowdisplaybreaks
\begin{eqnarray*}
&&\|[b,\ L^{-\alpha/2}]f\|_{L^{q, kq/p}(\wz^{q})}\\
&\leq &C \|M^\sharp_{L,\dz}([b,\ L^{-\alpha/2}]f)\|_{L^{q, kq/p}(\wz^{q})}\\
&\leq &C
\|b\|_{Lip_{\bz}(\wz)}\l(\|M_{\beta,1}(L^{-\alpha/2}f)\|_{L^{q,
kq/p}(\wz^{q})}\r.\\
&\q+&\|M_{\alpha+\beta, r}f\|_{L^{q,
kq/p}(\wz^q)}+\l.\|M_{\alpha+\beta, 1}f\|_{L^{q, kq/p}(\wz^{q})}\r).
\end{eqnarray*}}
Let $1/{q_1}=1/p-\az/n$ and $1/q=1/{q_1}-\bz/n$. Since $\wz^q\in
A_1$, then by Eq.\eqref{e3}, we have $\wz\in A_{p, q}$. Since
$0<k<\min\{p/q,\ p\bz/n\}$, by Lemmas \ref{l01}--\ref{l03}, we yield
that
\begin{eqnarray*}
&&\|[b,\ L^{-\alpha/2}]f\|_{L^{q, kq/p}(\wz^{q})}\\
&\leq &C \|b\|_{Lip_{\bz}(\rz)}\l(\|L^{-\alpha/2}f\|_{L^{{q_1},
k{q_1}/p}(\wz^{{q_1}},\ \wz^q)}+\|f\|_{L^{p, k}(\wz^p,\ \wz^q)}\r)\\
&\leq &C \|b\|_{Lip_{\bz}(\rz)}\|f\|_{L^{p, k}(\wz^p,\ \wz^q)}.
\end{eqnarray*}

Now we prove $(b)$.  Let $L = -\Delta$ be the Laplacian on
$\rz$, then $L^{-{\alpha/ 2}}$ is the classical fractional integral
$I_\alpha$. Let $k=0$ and weight $\omega\equiv 1,$  then
$L^{p,k}(\omega^p,\omega^q)=L^p$ and
$L^{q,kq/p}(\omega^q,\omega)=L^q.$ From \cite{Pa}, the $(L^p,
L^q)$ bounedness of $[b,I_\alpha]$ implies that $b\in Lip_{\bz}(\rz)$.

Thus Theorem \ref{t3} is proved.
\qed

\section{proof of theorem \ref{t4}}
We also need some Lemmas to prove Theorem \ref{t4}.

\begin{lem}\label{l1}(\cite{WH}) Let $0<\alpha+\beta <n,
1<p<\frac{n}{\alpha+\beta},{1}/{q}={1}/{p}-{\alpha}/{n}, {1}/{s}={1}/{q}-{\beta}/{n}$
and $\omega^{s/p}\in A_1.$
 Then if $0<k<p/s$ and $r_\omega> \frac{1}{p/q-k}$, we have
$$\|M_{\beta, 1}f\|_{L^{s,ks/p}(\omega^{s/p},\omega)}\leq C\|f\|_{L^{q,kq/p}(\omega^{q/p},\omega)}.$$
\end{lem}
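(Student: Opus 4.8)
The plan is to fix a ball $B=B(x_0,r_B)$, write $f=f_1+f_2$ with $f_1=f\chi_{2B}$ and $f_2=f\chi_{\rz\setminus 2B}$, and use the sublinearity $M_{\bz,1}f\le M_{\bz,1}f_1+M_{\bz,1}f_2$ to estimate the contributions of $f_1$ and $f_2$ to $\left(\wz(B)^{-ks/p}\int_B(M_{\bz,1}f)^s\wz^{s/p}\right)^{1/s}$ separately and uniformly in $B$. Two preliminary facts will be used throughout. First, since $0<p/s<1$ and $A_1$ is stable under powers in $(0,1]$ (by Jensen's inequality), the hypothesis $\wz^{s/p}\in A_1$ gives $\wz\in A_1$, so $\wz$ is doubling. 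Second, by the characterisation \eqref{e3} with $q,s$ in place of $p,q$, one has $\wz^{1/p}\in A_{q,s}$, because $\wz^{s/p}\in A_1\subset A_{1+s/q'}$.

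For the local piece I would invoke the weighted strong-type bound $M_{\bz,1}\colon L^q(\wz^{q/p})\to L^s(\wz^{s/p})$, valid precisely because $\wz^{1/p}\in A_{q,s}$ with $1/s=1/q-\bz/n$. This gives $\int_B(M_{\bz,1}f_1)^s\wz^{s/p}\le C\left(\int_{2B}|f|^q\wz^{q/p}\right)^{s/q}$; dividing by $\wz(B)^{ks/p}$, taking $s$-th roots, and bounding $\left(\int_{2B}|f|^q\wz^{q/p}\right)^{1/q}\le\wz(2B)^{k/p}\|f\|_{L^{q,kq/p}(\wz^{q/p},\wz)}$, the doubling of $\wz$ absorbs the ratio $\wz(2B)^{k/p}/\wz(B)^{k/p}$ and yields the required bound by $C\|f\|_{L^{q,kq/p}(\wz^{q/p},\wz)}$.

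For the global piece I would use the standard annular estimate: for $x\in B$ one has $M_{\bz,1}f_2(x)\le C\sum_{j\ge1}|2^{j}B|^{\bz/n-1}\int_{2^{j}B}|f|$. Applying Hölder with exponents $q,q'$ to each term, the factor $\left(\int_{2^jB}|f|^q\wz^{q/p}\right)^{1/q}$ is controlled by $\wz(2^jB)^{k/p}\|f\|_{L^{q,kq/p}(\wz^{q/p},\wz)}$, while the remaining factor $\left(\int_{2^jB}\wz^{-q'/p}\right)^{1/q'}$ is estimated using the $A_{q,s}$ condition for $\wz^{1/p}$ together with Jensen's inequality, producing powers of $|2^jB|$ and of $\wz(2^jB)$. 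Since the resulting bound for $M_{\bz,1}f_2(x)$ is constant in $x\in B$, its $L^{s,ks/p}(\wz^{s/p},\wz)$-contribution reduces to multiplying by the prefactor $\wz(B)^{-k/p}\left(\int_B\wz^{s/p}\right)^{1/s}$. Collecting all weight powers and using $|2^jB|\approx 2^{jn}|B|$ turns the estimate into a geometric series whose $j$-th term is a constant multiple of $2^{j(\az+\bz)}\bigl(\wz(B)/\wz(2^jB)\bigr)^{\theta}$ for a suitable exponent $\theta$ determined by $p,q,k$.

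The crux is the convergence of this series, and it is here that the hypothesis on $r_\omega$ is consumed. The prefactor mixes the inner weight $\wz^{s/p}$ with the outer weight $\wz$; this mismatch resolves itself, since $\wz^{s/p}\in A_1$ forces $\wz\in RH_{s/p}$ and hence $\left(\int_B\wz^{s/p}\right)^{1/s}\le C|B|^{1/s-1/p}\wz(B)^{1/p}$, independently of $r_\omega$. The genuinely delicate point is the comparison of $\wz(B)$ with $\wz(2^jB)$: choosing $r$ with $1<r<r_\omega$ and $\wz\in RH_r$ yields $\wz(B)/\wz(2^jB)\le C\,2^{-jn(r-1)/r}$, so the $j$-th term is dominated by $2^{j[(\az+\bz)-c_r]}$ for a gain $c_r$ built from $r$ and the scaling parameters, with $c_r$ increasing as $r\uparrow r_\omega$. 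Convergence requires $c_r>\az+\bz$ for some admissible $r<r_\omega$, and the threshold $r_\omega>\frac{1}{p/q-k}$ is exactly the condition under which such an $r$ exists. I expect this exponent bookkeeping — extracting the sharp gain from the reverse Hölder inequality so that the admissible range $r<r_\omega$ suffices under the stated threshold, rather than a stronger one coming from cruder weight estimates — to be the main technical obstacle; everything else is routine once $\wz\in A_1$ and $\wz^{1/p}\in A_{q,s}$ are in hand.
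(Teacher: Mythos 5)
The paper itself contains no proof of this lemma: it is quoted from \cite{WH}, so your proposal can only be judged on its own merits. Your strategy (split $f=f_1+f_2$; Muckenhoupt--Wheeden via $\omega^{1/p}\in A_{q,s}$ for the local part; annuli, H\"older, $A_{q,s}$ plus Jensen, and reverse H\"older for the tail) is the natural one, and nearly every step is sound, including the two preliminary reductions and the treatment of the mixed prefactor via $\omega\in RH_{s/p}$.

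The gap is precisely at the step you flag as the crux: the exponent bookkeeping does not produce the stated threshold. Running your own estimates, the $j$-th term of the series is (up to constants) $2^{j(\alpha+\beta)}\bigl(\omega(B)/\omega(2^jB)\bigr)^{(1-k)/p}$, i.e.\ $\theta=(1-k)/p$, coming from $\omega(2^jB)^{k/p}$ (source norm) times $\omega(2^jB)^{-1/p}$ (Jensen against the $A_{q,s}$ bound) against $\omega(B)^{(1-k)/p}$ from the prefactor. With $\omega\in RH_r$ the decay per term is $2^{-jn\frac{r-1}{r}\cdot\frac{1-k}{p}}$, and since $\alpha+\beta=n(1/p-1/s)$, convergence for some admissible $r<r_\omega$ is equivalent to
$$\frac{r-1}{r}>\frac{1-p/s}{1-k}\ \text{ for some } r<r_\omega,\qquad\text{i.e.}\qquad r_\omega>\frac{1-k}{p/s-k},$$
not to $r_\omega>\frac{1}{p/q-k}$. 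These thresholds genuinely differ: $\frac{1-k}{p/s-k}\leq\frac{1}{p/q-k}$ holds exactly when $p\beta/n\leq k(p/q-k)$, which fails whenever $k$ is small (as $k\to0^+$ your required threshold tends to $s/p$, while the stated one tends to $q/p<s/p$). So under the hypothesis as written your series need not converge, and your argument proves the lemma only on the sub-range $p\beta/n\leq k(p/q-k)$; the closing assertion that the stated threshold ``is exactly the condition under which such an $r$ exists'' is false. A remark that may redeem the work: the threshold your computation does yield, $\frac{1-k}{p/s-k}$ with $s$ the \emph{target} exponent, is exactly what makes the lemma's application in the proof of Theorem \ref{t4} (where the target is $q$ and the standing assumption is $r_\omega>\frac{1-k}{p/q-k}$) consistent, so the condition printed in the lemma is quite plausibly a misprint and your proof establishes the corrected statement. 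But as a proof of the statement as written, this is a real gap, not a removable bookkeeping slip.
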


\begin{lem}\label{l2}(\cite{WH}) Let $0<\alpha <n,
1<p<{n}/{\alpha}, {1}/{q}={1}/{p}-{\alpha}/{n}$ and
$\omega^{q/p}\in A_1.$
 Then if $0<k<p/q$ and $r_\omega> \frac{1-k}{p/q-k}$, we have
$$\|M_{\alpha, 1}f\|_{L^{q,kq/p}(\omega^{q/p},\omega)}\leq C\|f\|_{L^{p,k}(\omega)}.$$
\end{lem}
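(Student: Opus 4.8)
The plan is to run the standard Morrey-space splitting argument. Fix an arbitrary ball $B=B(x_0,r_B)$, write $f=f_1+f_2$ with $f_1=f\chi_{2B}$ and $f_2=f\chi_{\rz\setminus 2B}$, and use the sublinearity $M_{\alpha,1}f\leq M_{\alpha,1}f_1+M_{\alpha,1}f_2$ to treat the two pieces separately. Throughout I abbreviate $\sigma=\omega^{q/p}$ and set $v=\omega^{1/p}$, so that $v^p=\omega$ and $v^q=\sigma$; by \eqref{e3} the hypothesis $\sigma\in A_1\subseteq A_{1+q/p'}$ gives $v\in A_{p,q}$, which is the engine for both parts. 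The goal is the uniform estimate $\big(\omega(B)^{-kq/p}\int_B|M_{\alpha,1}f|^q\sigma\big)^{1/q}\leq C\|f\|_{L^{p,k}(\omega)}$.

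For the local part I would invoke the classical Muckenhoupt--Wheeden weighted strong-type theorem: since $v\in A_{p,q}$, the fractional maximal operator $M_{\alpha,1}=M_\alpha$ maps $L^p(v^p\,dx)=L^p(\omega)$ into $L^q(v^q\,dx)=L^q(\sigma)$. Thus $\int_B|M_{\alpha,1}f_1|^q\sigma\leq C\big(\int_{2B}|f|^p\omega\big)^{q/p}\leq C\|f\|_{L^{p,k}(\omega)}^{q}\,\omega(2B)^{kq/p}$, and after dividing by $\omega(B)^{kq/p}$ the estimate closes once $\omega$ is known to be doubling. Doubling of $\omega$ follows from $A_1$ of $\sigma$ together with $\omega=\sigma^{p/q}$: H\"older gives $\omega(2B)\leq\sigma(2B)^{p/q}|2B|^{1-p/q}$, while the $A_1$ lower bound $\sigma(x)\geq c\,\sigma(B)/|B|$ a.e.\ on $B$ yields $\omega(B)\geq c\,\sigma(B)^{p/q}|B|^{1-p/q}$, so $\omega(2B)\leq C\omega(B)$.

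For the global part the decisive step is a pointwise bound for $x\in B$. Only balls of radius comparable to or larger than $r_B$ contribute to $M_{\alpha,1}f_2(x)$, so summing over the dyadic dilates $B_j=2^{j+1}B$ gives
\[ M_{\alpha,1}f_2(x)\leq C\sum_{j\geq 1}\frac{1}{|B_j|^{1-\alpha/n}}\int_{B_j}|f(y)|\,dy. \]
I would then apply H\"older with exponents $p,p'$ against $\omega^{1/p}$, bounding the first factor by $\|f\|_{L^{p,k}(\omega)}\,\omega(B_j)^{k/p}$ and the second by the $A_{p,q}$ condition for $v$, and use the arithmetic identity $1-\alpha/n=1/p'+1/q$ to cancel every power of $|B_j|$. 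This produces
\[ M_{\alpha,1}f_2(x)\leq C\|f\|_{L^{p,k}(\omega)}\sum_{j\geq 1}\omega(B_j)^{k/p}\sigma(B_j)^{-1/q}, \]
a quantity independent of $x$. Inserting it into the Morrey norm over $B$ reduces the whole global estimate to the uniform summability
\[ \sum_{j\geq 1}\Big(\frac{\omega(B_j)}{\omega(B)}\Big)^{k/p}\Big(\frac{\sigma(B)}{\sigma(B_j)}\Big)^{1/q}\leq C. \]

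The hard part will be this last series, and it is exactly here that $r_\omega>\frac{1-k}{p/q-k}$ enters. I would tame the growing factor by $A_1$ of $\sigma$: H\"older plus the $A_1$ lower bound give $\omega(B_j)/\omega(B)\leq C\,(\sigma(B_j)/\sigma(B))^{p/q}(|B_j|/|B|)^{1-p/q}$, and after substitution the series collapses to $\sum_{j\geq1}(|B_j|/|B|)^{(1-p/q)k/p}(\sigma(B)/\sigma(B_j))^{(1-k)/q}$. The decaying factor I would control by the reverse-H\"older measure comparison $\sigma(B)/\sigma(B_j)\leq C(|B|/|B_j|)^{1-1/t}$, valid for any $t<r_\sigma$; since $\sigma=\omega^{q/p}$ its critical index is $r_\sigma=(p/q)r_\omega$, so letting $t\uparrow r_\sigma$ the net exponent of $|B_j|/|B|=2^{(j+1)n}$ is negative precisely when
\[ (q-p)k<(1-k)\Big(p-\frac{q}{r_\omega}\Big), \]
which rearranges to $r_\omega>\frac{1-k}{p/q-k}$. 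Under this hypothesis the series is a convergent geometric series with constant independent of $B$, and combining it with the local estimate finishes the proof. The only genuinely delicate point is extracting the sharp reverse-H\"older exponent and checking the sign of the resulting power; everything else is routine bookkeeping.
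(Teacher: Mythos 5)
The paper never proves this lemma: it is imported verbatim from Wang \cite{WH}, so there is no internal proof to compare against. Your argument is correct, and it is in substance the standard proof of such weighted Morrey estimates (the same template as in the cited source): split $f=f\chi_{2B}+f\chi_{\rz\setminus 2B}$, treat the local piece by the Muckenhoupt--Wheeden theorem (legitimate because $\omega^{q/p}\in A_1$ gives $\omega^{1/p}\in A_{p,q}$ via \eqref{e3}) together with doubling of $\omega$, and treat the tail by the dyadic pointwise bound, the $A_{p,q}$ H\"older step, the exponent identity $1-\alpha/n=1/p'+1/q$, and a reverse H\"older comparison to sum the resulting series. Two points deserve an explicit line in a full write-up. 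First, your doubling argument for $\omega$ implicitly uses that $\sigma=\omega^{q/p}$ is itself doubling; this is immediate from $\sigma\in A_1$, so it is a harmless omission. Second, and more substantively, the identity $r_\sigma=(p/q)\,r_\omega$ is not pure bookkeeping: it rests on the equivalence $w\in RH_s \Leftrightarrow w^s\in A_\infty$, applied to $\omega$ and to $\sigma$ (both lie in $A_1$, since $\omega=\sigma^{p/q}$ is a power $p/q<1$ of the $A_1$ weight $\sigma$). Granting that standard fact, your computation that the geometric series converges precisely when $(q-p)k<(1-k)\left(p-\frac{q}{r_\omega}\right)$, i.e.\ when $r_\omega>\frac{1-k}{p/q-k}$, is correct and recovers exactly the hypothesis of the lemma.
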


\begin{lem}\label{l3}(\cite{WH}) Let $0<\alpha <n,
1<p<{n}/{\alpha}, {1}/{q}={1}/{p}-{\alpha}/{n}$,
$0<k<p/q$, $\omega\in A_\infty.$ For any $1<r<p,$ we have
$$\|M_{\alpha, r,\omega}f\|_{L^{q,kq/p}(\omega^{q/p},\omega)}\leq C\|f\|_{L^{p,k}(\omega)}.$$
\end{lem}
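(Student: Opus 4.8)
The plan is to deduce the weighted Morrey estimate from the corresponding weighted Lebesgue (the $k=0$) inequality by the standard local/global decomposition of $f$ relative to the ball that defines the Morrey norm, using that $\omega\in A_\infty$ makes $d\mu=\omega\,dx$ a doubling measure. First I would record the elementary pointwise identity
\[
M_{\alpha,r,\omega}f=\big(M_{\alpha r,1,\omega}(|f|^r)\big)^{1/r},
\]
which lets me view $M_{\alpha,r,\omega}$ as a power of the fractional maximal operator of order $s=\alpha r/n\in(0,1)$ attached to the doubling measure $\mu=\omega\,dx$ on the space of homogeneous type $(\rz,|\cdot|,\mu)$. Applying the $L^{P}(\mu)\to L^{Q}(\mu)$ boundedness of such an operator with $1/P-1/Q=s$, where $P=p/r$ and $Q=q/r$ (this is legitimate because $1<r<p$ forces $P>1$, and $\alpha r<\alpha p<n$), and raising to the power $1/r$, yields the endpoint inequality
\[
\Big(\int_{\rz}\big(M_{\alpha,r,\omega}h\big)^q\,\omega\,dx\Big)^{1/q}\le C\Big(\int_{\rz}|h|^p\,\omega\,dx\Big)^{1/p}.
\]

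Next I fix a ball $B_0$ and split $f=f_1+f_2$ with $f_1=f\chi_{2B_0}$. For the local part I apply the endpoint inequality to $f_1$, then use $\int_{2B_0}|f|^p\omega\le\|f\|_{L^{p,k}(\omega)}^p\,\omega(2B_0)^k$ together with the doubling of $\omega$ to absorb $\omega(2B_0)^{kq/p}$ into $\omega(B_0)^{kq/p}$; this contribution is routine. For the global part I estimate, for $x\in B_0$ and any admissible ball $B\ni x$ meeting $(2B_0)^c$, the average $\omega(B)^{\alpha r/n-1}\int_B|f_2|^r\omega$ by Hölder's inequality with exponents $p/r$ and $(p/r)'$, combined with the Morrey bound $\int_B|f|^p\omega\le\|f\|_{L^{p,k}(\omega)}^p\,\omega(B)^k$. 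After simplifying with $1/q=1/p-\alpha/n$ this produces the pointwise bound
\[
M_{\alpha,r,\omega}f_2(x)\le C\,\|f\|_{L^{p,k}(\omega)}\,\omega(B_0)^{k/p-1/q},\qquad x\in B_0,
\]
where crucially $k/p-1/q<0$ because $k<p/q$, so the supremum over the admissible balls is governed by the smallest contributing ball, which is comparable to $B_0$.

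The main obstacle is reconciling this global estimate with the target weight $\omega^{q/p}$. Inserting the pointwise bound into the target Morrey average over $B_0$ leaves the factor $\omega(B_0)^{-1}\int_{B_0}\omega^{q/p}\,dx$, so the argument hinges on controlling $\int_{B_0}\omega^{q/p}$ against $\omega(B_0)$ uniformly in $B_0$. This is exactly the step where the reverse Hölder ($A_\infty$) structure of $\omega$ must be invoked, and arranging the reverse Hölder exponents so that they close against the scaling relation $1/q=1/p-\alpha/n$ and the admissible range $0<k<p/q$ is the delicate heart of the proof; I expect this to be the part that genuinely uses the hypotheses on $\omega$ rather than just doubling. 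Once this weight estimate is secured, the local and global contributions combine to give the desired bound for the fixed $B_0$, and taking the supremum over all balls $B_0$ completes the argument.
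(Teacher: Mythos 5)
The paper offers no proof of this lemma at all---it is quoted from \cite{WH}---so the only comparison available is against the statement itself and against the way the paper later uses it. On that comparison your write-up splits into a correct core and an unfillable final step. The core is fine: the identity $M_{\alpha,r,\omega}f=\bigl(M_{\alpha r,1,\omega}(|f|^r)\bigr)^{1/r}$, the endpoint bound $\|M_{\alpha,r,\omega}h\|_{L^q(\omega)}\le C\|h\|_{L^p(\omega)}$ obtained by treating $M_{\alpha r,1,\omega}$ as the fractional maximal operator of the doubling measure $\omega\,dx$ (doubling because $\omega\in A_\infty$), and the split $f=f\chi_{2B_0}+f_2$ with the pointwise estimate $M_{\alpha,r,\omega}f_2(x)\le C\|f\|_{L^{p,k}(\omega)}\,\omega(B_0)^{k/p-1/q}$ on $B_0$ (negative exponent since $k<p/q$, plus doubling to get $\omega(B)\gtrsim\omega(B_0)$ for the balls that meet $(2B_0)^c$) are all correct. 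Assembled against the \emph{one-weight} norm they yield, with no further input,
\[
\|M_{\alpha,r,\omega}f\|_{L^{q,kq/p}(\omega)}\le C\|f\|_{L^{p,k}(\omega)}.
\]

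The step you deferred, however, is not a delicate point to be settled by reverse H\"older; it is impossible, because the lemma as printed is false. Testing $f=\chi_{B_0}$ (for which $\|f\|_{L^{p,k}(\omega)}=\omega(B_0)^{(1-k)/p}$ and $M_{\alpha,r,\omega}f\ge\omega(B_0)^{\alpha/n}$ on $B_0$) shows that the printed inequality forces $\int_{B}\omega(x)^{q/p}\,dx\le C\,\omega(B)$ uniformly over all balls $B$. No $A_\infty$ hypothesis gives this: for $\omega(x)=|x|^{a}$ with $a>0$, which is in $A_\infty$, and $B=B(R\,e_1,1)$, the left side is $\approx R^{aq/p}$ while the right side is $\approx R^{a}$, so the ratio blows up as $R\to\infty$ (equivalently, the two sides of the printed lemma differ by the unbounded factor $R^{a\alpha/n}$); for $\omega(x)=|x|^{-b}\in A_1$ with $b>0$ small, the same failure occurs on small balls centered at the origin. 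What has happened is that the target space in the statement is a mis-transcription, presumably copied from Lemma \ref{l2}, where the two-weight space $L^{q,kq/p}(\omega^{q/p},\omega)$ is appropriate because the operator there is the unweighted $M_{\alpha,1}$ and the hypotheses are much stronger ($\omega^{q/p}\in A_1$ plus a critical-index condition). For the $\omega$-adapted operator $M_{\alpha,r,\omega}$ the correct target is $L^{q,kq/p}(\omega)$, and that is exactly the form in which the paper invokes Lemma \ref{l3} in the proof of Theorem \ref{t4}: there the prefactor $\omega(\cdot)^{1-\alpha/n}$ converts the weight $\omega^{1-(1-\alpha/n)q}$ into $\omega$, and what is needed is $\|M_{\alpha+\beta,r,\omega}f\|_{L^{q,kq/p}(\omega)}\le C\|f\|_{L^{p,k}(\omega)}$. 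That is precisely the inequality your argument proves, so the right fix is to correct the statement and delete your final step, not to search for a weight inequality that would justify it.
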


\begin{lem} \label{l5} Let $0<\alpha <n,
1<p<{n}/{\alpha}, {1}/{q}={1}/{p}-{\alpha}/{n}$ and
$\omega^{q/p}\in A_1.$
 Then if $0<k<p/q$ and $r_\omega> \frac{1-k}{p/q-k}$, we have
$$\|L^{-\alpha/2}f\|_{L^{q,kq/p}(\omega^{q/p},\ \omega)}\leq C\|f\|_{L^{p,k}(\omega)}.$$
\end{lem}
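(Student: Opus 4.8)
The plan is to reduce the boundedness of the general fractional integral $L^{-\alpha/2}$ to that of the classical fractional integral $I_\alpha$, exactly as was done in Lemma \ref{l03}. The key pointwise observation is the same one used repeatedly in this paper: because the heat kernel $p_t(x,y)$ satisfies the Gaussian upper bound \eqref{G}, one integrates the defining formula for $L^{-\alpha/2}$ and dominates it by the Riesz potential, obtaining
\begin{equation*}
 |L^{-\alpha/2}f(x)| \leq C\, I_\alpha(|f|)(x), \qquad x\in\rz.
\end{equation*}
This follows by inserting the bound $|p_t(x,y)|\leq C t^{-n/2}e^{-c|x-y|^2/t}$ into the integral defining $L^{-\alpha/2}f$, interchanging the order of integration, and computing the resulting $t$-integral, which reproduces the kernel $|x-y|^{\alpha-n}$ of $I_\alpha$ up to a constant.

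Once this domination is in hand, the claim is an immediate consequence of a known weighted Morrey estimate for $I_\alpha$ itself. I would invoke the corresponding result from \cite{WH}, namely that under the hypotheses $0<\alpha<n$, $1<p<n/\alpha$, $1/q=1/p-\alpha/n$, $\omega^{q/p}\in A_1$, together with $0<k<p/q$ and the reverse-H\"older condition $r_\omega>\frac{1-k}{p/q-k}$, one has
\begin{equation*}
 \|I_\alpha f\|_{L^{q,kq/p}(\omega^{q/p},\,\omega)} \leq C\|f\|_{L^{p,k}(\omega)}.
\end{equation*}
Combining the pointwise bound with monotonicity of the Morrey norm in the integrand then yields
\begin{equation*}
 \|L^{-\alpha/2}f\|_{L^{q,kq/p}(\omega^{q/p},\,\omega)}
 \leq C\|I_\alpha(|f|)\|_{L^{q,kq/p}(\omega^{q/p},\,\omega)}
 \leq C\|f\|_{L^{p,k}(\omega)},
\end{equation*}
which is the assertion of Lemma \ref{l5}.

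The parameter bookkeeping should be checked carefully, since this is where an error would most likely hide: the exponents $p,q$ and the index $k$, the membership $\omega^{q/p}\in A_1$, and the sharp reverse-H\"older threshold $r_\omega>\frac{1-k}{p/q-k}$ must match exactly the hypotheses under which the $I_\alpha$ estimate from \cite{WH} is valid. These are precisely the conditions of Lemma \ref{l2} specialized to the single fractional integral (with $r=1$ replaced by the full operator), so I expect them to align without adjustment. The genuine analytic content—the interplay between the $A_1$-weight, the reverse-H\"older index, and the two-weight Morrey structure—is entirely absorbed into the cited $I_\alpha$ result, so the only real work here is the kernel domination, which is standard given \eqref{G}. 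Thus the proof is essentially a two-line reduction, and the main (minor) obstacle is verifying the Gaussian-bound computation that produces the constant $C$ in the pointwise inequality, together with confirming that the weight and index hypotheses transfer verbatim.
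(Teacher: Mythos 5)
Your proposal is correct and follows essentially the same route as the paper: the pointwise domination $|L^{-\alpha/2}f(x)|\leq C\,I_\alpha(|f|)(x)$ obtained from the Gaussian bound \eqref{G}, followed by the weighted Morrey boundedness of $I_\alpha$ from \cite{WH}. The paper simply states the pointwise bound as ``easy to check'' and then cites the $I_\alpha$ result, exactly as you do, so your proof matches the original in both structure and substance.
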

\begin{proof}
Since the semigroup $e^{-tL}$ has a kernel $p_t(x,y)$ which
satisfies the upper bound \eqref{G}, it is easy to check that
$L^{-\alpha/2}f(x)\leq C I_\alpha(|f|)(x) $ for all $x\in
\rz.$ Using the boundedness property of $I_\alpha$ on weighted
Morrey space (cf. \cite{WH}), we have
$$\|L^{-\alpha/2}f\|_{L^{q,kq/p}(\omega^{q/p},\ \omega)}
\leq\|I_\alpha f\|_{L^{q,kq/p}(\omega^{q/p},\ \omega)}  \leq
C\|f\|_{L^{p,k}(\omega)},$$ where $1<p<{n}/{\alpha}$ and
${1}/{q}={1}/{p}-{\alpha}/{n}.$
\end{proof}
\begin{rmk} \label{r3}  It is easy to check that the above lemmas also hold for $k=0$.
\end{rmk}
\begin{lem} \label{l14} Assume that the semigroup $e^{-tL}$ has a kernel $p_t(x,y)$ which satisfies the upper bound \eqref{G},
 and let $b\in Lip_\beta(\omega),\  \omega\in A_1.$ Then, for every function $f\in L^p(\rz),\  p>1, \ x\in \rz, $ and $1<r<\infty, $ we have
 $$\sup_{x\in B} \frac{1}{|B|}\int_B|e^{-t_BL}(b(y)-b_B)f(y)|dy\leq C \|b\|_{Lip_\beta(\omega)}\omega(x)M_{\beta,r,\omega}f(x).$$
\end{lem}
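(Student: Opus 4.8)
The plan is to dominate the heat semigroup by the Gaussian bound \eqref{G}, decompose the spatial integral into a central ball together with dyadic annuli, and balance the super-exponential decay of the Gaussian against the at most exponential growth produced by the weight and by the differences of ball averages of $b$.

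Fix a ball $B=B(x_0,r_B)\ni x$. Writing $e^{-t_BL}$ through its kernel and using \eqref{G} with $t_B=r_B^2$, I would first reduce to
$$
\frac1{|B|}\int_B\big|e^{-t_BL}[(b-b_B)f](y)\big|\,dy
\le\frac{C}{|B|}\int_B\frac1{r_B^{\,n}}\int_{\rz}e^{-c|y-z|^2/r_B^2}|b(z)-b_B|\,|f(z)|\,dz\,dy .
$$
Then I would split $\rz=2B\cup\bigcup_{j\ge1}A_j$ with $A_j=2^{j+1}B\setminus 2^{j}B$: on $2B$ the Gaussian is $\le 1$, while for $z\in A_j$ and $y\in B$ one has $|y-z|\ge 2^{j-1}r_B$, so the Gaussian is $\le e^{-c4^{j-1}}$. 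Integrating out the harmless $y$-variable and using $|B|\approx r_B^{\,n}$, matters reduce to estimating, for each $j\ge0$ (with $j=0$ the central ball), the quantity $e^{-c4^{j-1}}|B|^{-1}\int_{2^{j+1}B}|b(z)-b_B|\,|f(z)|\,dz$.

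For each piece I would use $|b(z)-b_B|\le|b(z)-b_{2^{j+1}B}|+|b_{2^{j+1}B}-b_B|$. The first term I would treat by H\"older with exponents $r,r'$ after inserting $\omega^{1/r}\omega^{-1/r}$; since $-r'/r=1-r'=-1/(r-1)$, the factor $\big(\int|b-b_{2^{j+1}B}|^{r'}\omega^{1-r'}\big)^{1/r'}$ is exactly the quantity controlled by the $Lip^{r'}_\beta(\omega)$ norm, which is comparable to $\|b\|_{Lip_\beta(\omega)}$ by Remark \ref{r1} (here $\omega\in A_1$), giving $\le C\|b\|_{Lip_\beta(\omega)}\omega(2^{j+1}B)^{\beta/n+1/r'}$; meanwhile $\big(\int|f|^r\omega\big)^{1/r}\le \omega(2^{j+1}B)^{1/r-\beta/n}M_{\beta,r,\omega}f(x)$ because $x\in 2^{j+1}B$, and the exponents of $\omega(2^{j+1}B)$ add to $1$. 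The second term I would bound by the telescoping sum $|b_{2^{j+1}B}-b_B|\le\sum_{i=0}^{j}|b_{2^{i+1}B}-b_{2^{i}B}|$, each difference being $\le C\|b\|_{Lip_\beta(\omega)}\omega(2^{i+1}B)^{1+\beta/n}/|2^{i+1}B|$, and estimate $\int_{2^{j+1}B}|f|$ by H\"older, using $\omega\in A_1\subset A_r$ to get $\big(\int\omega^{-1/(r-1)}\big)^{1/r'}\le C|2^{j+1}B|\,\omega(2^{j+1}B)^{-1/r}$. In both cases the $j$-th piece is
$$
\lesssim P(j)\,e^{-c4^{j-1}}\,\|b\|_{Lip_\beta(\omega)}\,\frac{\omega(2^{j+1}B)}{|B|}\,M_{\beta,r,\omega}f(x),
$$
with $P(j)$ growing at most exponentially in $j$, while the $A_1$ condition yields $\omega(2^{j+1}B)/|B|\le C\,2^{(j+1)n}\omega(x)$.

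Summing over $j$, the series $\sum_j P(j)\,2^{(j+1)n}e^{-c4^{j-1}}$ converges because the Gaussian decay dominates every exponential factor, and the supremum over $B\ni x$ follows since the bound is independent of $B$. I expect the main obstacle to be the bookkeeping of this convergence: one must check that the growth coming from $\omega(2^{j+1}B)$, from the telescoping average-differences, and from $2^{(j+1)n}$ is genuinely at most exponential in $j$ — for which the doubling property of the $A_1$ weight is essential — so that it is absorbed by $e^{-c4^{j-1}}$; correctly matching the weight exponents in the two H\"older steps (via $-r'/r=1-r'=-1/(r-1)$ and the norm equivalence of Remark \ref{r1}) is the other point requiring care.
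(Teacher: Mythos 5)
Your proposal is correct and follows essentially the same path as the paper's own proof: the Gaussian kernel bound \eqref{G}, the splitting into $2B$ plus dyadic annuli, H\"older with the $Lip^{r'}_\beta(\omega)$ norm (via Remark \ref{r1}) for the $|b-b_{2^{j+1}B}|$ part, a telescoping bound with the $A_1$ condition for the average differences, and absorption of the at-most-exponential growth by the Gaussian decay $e^{-c4^{j-1}}$. Your treatment of the average-difference term (using $\omega\in A_1\subset A_r$ on the dual-weight integral) is in fact a cleaner writing of the paper's somewhat garbled estimate for its term $\mathcal{N}_2$, but it is not a different argument.
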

\begin{proof}
Fix $f\in L^p(\rz), 1<p<\infty$ and $x\in B.$ Then
{\allowdisplaybreaks
\begin{eqnarray*}
&&\frac{1}{|B|}\int_B |e^{-t_B L}((b(\cdot)-b_B)f)(y)|dy\\
&\leq& \frac{1}{|B|}\int_B \int_{\rz} |p_{t_B}(y,z)\|(b(z)-b_B)f(z)|dz dy\\
&\leq&\frac{1}{|B|}\int_B \int_{2B} |p_{t_B}(y,z)\|(b(z)-b_B)f(z)|dz dy\\
&\quad +& \frac{1}{|B|}\int_B \sum_{k=1}^\infty\int_{2^{k+1}B\setminus 2^{k}B} |p_{t_B}(y,z)\|(b(z)-b_B)f(z)|dz dy\\
&\doteq &\mathcal{M}+ \mathcal{N}. \end{eqnarray*}}
It follows from  $y\in B$ and $z\in 2B$ that
$$ |p_{t_B}(y,z)|\leq Ct^{-{n}/{2}}_{B}\leq C \frac{1}{|2B|}.$$
Thus, H\"{o}lder's inequality and Definition \ref{d7} lead to
 {\allowdisplaybreaks
\begin{eqnarray*}
\mathcal{M}&\leq& C\frac{1}{|2B|} \int_{2B}|(b(z)-b_B)f(z)|dz\\
&\leq& C \frac{1}{|2B|}\left( \int_{2B}\|b(z)-b_B|^{r'}\omega(z)^{1-r'} dz\right)^{\frac{1}{r'}}
\left( \int_{2B}|f(z)|^r\omega(z)dz\right)^{\frac{1}{r}}\\
&\leq& C \|b\|_{Lip_\beta(\omega)}\frac{1}{|2B|}\omega(2B)^{\frac{\beta}{n}+\frac{1}{r'}} \omega(2B)^{\frac{1}{r}}
\left( \frac{1}{\omega(2B)}\int_{2B}|f(z)|^r\omega(z)dz\right)^{\frac{1}{r}}\\
&\leq& C \|b\|_{Lip_\beta(\omega)}\frac{1}{|2B|}\omega(2B)^{\frac{\beta}{n}+1}
\left( \frac{1}{\omega(2B)}\int_{2B}|f(z)|^r\omega(z)dz\right)^{\frac{1}{r}}\\
&\leq &C \|b\|_{Lip_\beta(\omega)}\omega(x)
\left( \frac{1}{\omega(2B)^{1-\frac{\beta r}{n}}}\int_{2B}|f(z)|^r\omega(z)dz\right)^{\frac{1}{r}}\\
&\leq &C \|b\|_{Lip_\beta(\omega)}\omega(x) M_{\beta,r,\omega}f(x).
\end{eqnarray*}} Moreover, for any $y\in B$ and $z\in 2^{k+1}B\setminus 2^{k}B$, we
have $|y-z|\geq 2^{k-1}r_B$  and $|p_{t_B}|\leq C
\frac{e^{-c2^{2(k-1)}}2^{(k+1)n}}{|2^{k+1}B|}$.
 {\allowdisplaybreaks
\begin{eqnarray*}
\mathcal{N}&=&\frac{1}{|B|}\int_B \sum_{k=1}^\infty\int_{2^{k+1}B\setminus 2^{k}B} |p_{t_B}(y,z)\|(b(z)-b_B)f(z)|dz dy\\
&\leq& C \sum_{k=1}^\infty \frac{e^{-c2^{2(k-1)}}2^{(k+1)n}}{|2^{k+1}B|}\int_{2^{k+1}B} |(b(z)-b_B)f(z)|dz\\
&\leq &C \sum_{k=1}^\infty \frac{e^{-c2^{2(k-1)}}2^{(k+1)n}}{|2^{k+1}B|}\int_{2^{k+1}B} |(b(z)-b_{2^{k+1}B})f(z)|dz\\
&\quad +&C \sum_{k=1}^\infty \frac{e^{-c2^{2(k-1)}}2^{(k+1)n}}{|2^{k+1}B|}\int_{2^{k+1}B} |(b_{2^{k+1}B}-b_{2B})f(z)|dz\\
&\doteq& \mathcal{N}_1+\mathcal{N}_2.
\end{eqnarray*}}
We will estimate the values of terms  $\mathcal{N}_1$ and $\mathcal{N}_2$, respectively.

Using H\"{o}lder's inequality and Remark \ref{r1}, we have
 {\allowdisplaybreaks
\begin{eqnarray*}
\mathcal{N}_1&\leq& C \sum_{k=1}^\infty
\frac{e^{-c2^{2(k-1)}}2^{(k+1)n}}{|2^{k+1}B|}\\
&&\times \left( \int_{2^{k+1}B}|b(z)-b_B|^{r'}\omega(z)^{1-r'}
dz\right)^{\frac{1}{r'}}
\left( \int_{2^{k+1}B}|f(z)|^r\omega(z)dz\right)^{\frac{1}{r}}\\
&\leq& C \sum_{k=1}^\infty
2^{(k+1)n}e^{-c2^{2(k-1)}}\\
&&\times \|b\|_{Lip_\bz(\wz)}\frac{\omega(2^{k+1}B)}{|2^{k+1}B|}
\left( \frac{1}{\omega(2^{k+1}B)^{1-\beta r/n}}\int_{2^{k+1}B}|f(z)|^r\omega(z)dz\right)^{\frac{1}{r}}\\
&\leq& C \|b\|_{Lip_\beta(\omega)}\omega(x) M_{\beta,r,\omega}f(x).
\end{eqnarray*}}

Since $\omega\in A_1,$ by the H\"{o}lder inequality, we get
 {\allowdisplaybreaks
\begin{eqnarray*}
\mathcal{N}_2&\leq& C \sum_{k=1}^\infty 2^{(k+1)n}e^{-c2^{2(k-1)}}\frac{k}{|2^{k+1}B|^{1-\beta r/n}}\omega(x)\|b\|_{Lip_\beta(\omega)}\int_{2^{k+1}B}|f(z)|dz\\
&\leq& C \sum_{k=1}^\infty
k2^{(k+1)n}e^{-c2^{2(k-1)}}\omega(x)\|b\|_{Lip_\beta(\omega)}
\left(\frac{1}{|2^{k+1}B|^{1-\beta r/n}}\int_{2^{k+1}B} |f(z)|^rdz\right)^{\frac{1}{r}}\\
&=& C \sum_{k=1}^\infty
k2^{(k+1)n}e^{-c2^{2(k-1)}}\\
& & \times \omega(x)\|b\|_{Lip_\beta(\omega)}
 \left(\frac{\omega(2^{k+1}B)^{1-\beta r/n}}{|2^{k+1}B|^{1-\beta r/n}}\frac{1}{\omega(2^{k+1}B)^{1-\beta r/n}}\int_{2^{k+1}B} |f(z)|^rdz\right)^{\frac{1}{r}}\\
&\leq &C \sum_{k=1}^\infty k2^{(k+1)n}e^{-c2^{2(k-1)}}
\omega(x)\|b\|_{Lip_\bz(\wz)}
\left(\frac{1}{\omega(2^{k+1}B)^{1-\beta r/n}}\int_{2^{k+1}B} |f(z)|^r \omega(x)dz\right)^{\frac{1}{r}}\\
&\leq& C \|b\|_{Lip_\beta(\omega)}\omega(x) M_{\beta,r,\omega}f(x).
\end{eqnarray*}}

Thus Lemma \ref{l14} is proved.
\end{proof}

\begin{lem} \label{l15} Let $0<\alpha <1,$ $\omega \in A_1$
and $b\in Lip_\beta(\omega).$ Then for all $r>1$ and for all $x\in
\rz,$ we have
\begin{eqnarray*}
&&M^{\sharp}_{L,\dz}([b, L^{-\alpha/2}]f)(x)\leq C \|b\|_{Lip_\beta(\omega)}\\
&\q & \times
\left(\omega(x)^{1+\frac{\beta}{n}}M_{\beta,1}(L^{-\alpha/2}f)(x)
+\omega(x)^{1-\frac{\alpha}{n}}M_{\alpha+\beta,
r,\omega}f(x)+\omega(x)^{1+\frac{\beta}{n}}M_{\alpha+\beta,
1}f(x)\right).
\end{eqnarray*}
\end{lem}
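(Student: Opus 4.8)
The plan is to bound $M^{\sharp}_{L,\dz}([b,L^{-\alpha/2}]f)(x)=\big(M^{\sharp}_{L}(|[b,L^{-\alpha/2}]f|^{\dz})(x)\big)^{1/\dz}$ ball by ball. Fix $B=B(x_B,r_B)\ni x$ and set $t_B=r_B^{2}$. Since constants commute with $L^{-\alpha/2}$, with $\lambda=b_{2B}$ we may write
\[
[b,L^{-\alpha/2}]f=(b-\lambda)L^{-\alpha/2}f-L^{-\alpha/2}\big((b-\lambda)f\big),
\]
and we split $f=f_1+f_2$ with $f_1=f\chi_{2B}$, $f_2=f\chi_{(2B)^c}$. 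Because $0<\dz<1$, the modification by the exponent $\dz$ is handled by the standard power inequalities (so that the $L^{\dz}$–averages are dominated by $L^{1}$–averages); the essential task is therefore to estimate the mean oscillation
\[
\frac{1}{|B|}\int_B\big|[b,L^{-\alpha/2}]f(y)-e^{-t_BL}([b,L^{-\alpha/2}]f)(y)\big|\,dy .
\]
Expanding $g-e^{-t_BL}g$ for $g=[b,L^{-\alpha/2}]f$ produces three groups: (i) the multiplier part $(b-\lambda)L^{-\alpha/2}f$ together with $e^{-t_BL}[(b-\lambda)L^{-\alpha/2}f]$; (ii) the local part $L^{-\alpha/2}((b-\lambda)f_1)$ and its heat average; and (iii) the far part, where I keep $L^{-\alpha/2}((b-\lambda)f_2)$ paired with $e^{-t_BL}L^{-\alpha/2}((b-\lambda)f_2)$ so that the difference operator $L^{-\alpha/2}-e^{-t_BL}L^{-\alpha/2}$ appears.

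For group (i) I would use H\"older's inequality, the equivalence of the weighted Lipschitz norms (Remark \ref{r1}) and Definition \ref{d7} to control $b-\lambda$ by $\|b\|_{Lip_\beta(\omega)}$, and then pass from $\wz(2B)/|2B|$ to $\wz(x)$ by $\wz\in A_1$; this yields the first term $\wz(x)^{1+\bz/n}M_{\beta,1}(L^{-\alpha/2}f)(x)$. The heat companion $e^{-t_BL}[(b-\lambda)L^{-\alpha/2}f]$ is exactly of the type treated in Lemma \ref{l14} (applied with $f$ replaced by $L^{-\alpha/2}f$) and contributes a term of the same, first, kind. Group (ii) runs in parallel, but the smoothing of $L^{-\alpha/2}$ on the local factor $(b-\lambda)f_1$ raises the order of the fractional average from $\bz$ to $\az+\bz$: H\"older's inequality with exponent $r$ against the weight $\wz$, the Lipschitz bound, and the $A_1$ property give the second term $\wz(x)^{1-\az/n}M_{\alpha+\beta,r,\omega}f(x)$, the associated heat average being estimated the same way.

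Group (iii) is where the semigroup is indispensable, since only Gaussian upper bounds (no conservation $e^{-t_BL}1=1$) are available: here I would invoke Lemma \ref{l6}, whose kernel bound
\[
K_{\alpha,t_B}(y,z)\le \frac{C}{|y-z|^{n-\alpha}}\,\frac{t_B}{|y-z|^{2}}
\]
provides, on the $k$-th annulus $2^{k+1}B\setminus 2^{k}B$, the extra decay $t_B/|y-z|^{2}\approx 2^{-2k}$. Summing over $k$ against the volume growth $2^{kn}$ and the oscillation estimate $|b_{2^{k+1}B}-b_{2B}|\lesssim k\,\|b\|_{Lip_\beta(\omega)}\,\wz(2^{k+1}B)^{1+\bz/n}/|2^{k+1}B|$ gives a convergent series and the third term $\wz(x)^{1+\bz/n}M_{\alpha+\beta,1}f(x)$. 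Taking the supremum over all $B\ni x$ and recombining the three groups yields the asserted pointwise inequality.

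The hard part will be group (iii): one must simultaneously exploit the gain $2^{-2k}$ from Lemma \ref{l6}, absorb the loss $2^{kn}$ from the annular volumes and the logarithmic loss $k$ from the slow growth of $b_{2^{k+1}B}-b_{2B}$, and --- most delicately --- keep exact track of the powers of $\wz(x)$ generated by the $A_1$ and reverse H\"older inequalities, so that the \emph{unweighted} fractional maximal operator $M_{\alpha+\beta,1}f$ emerges with precisely the factor $\wz(x)^{1+\bz/n}$ rather than a weighted average. The analogous bookkeeping in group (ii), which must instead produce the power $\wz(x)^{1-\az/n}$, is the second point requiring care.
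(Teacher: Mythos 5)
Your proposal follows essentially the same route as the paper: the identical decomposition of $[b,L^{-\alpha/2}]f$ into the multiplier part, the local part $L^{-\alpha/2}((b-\lambda)f_1)$, and the far part kept paired so that the difference operator $L^{-\alpha/2}-e^{-t_BL}L^{-\alpha/2}$ appears (the paper's terms I--V, grouped as your (i), (ii), (iii)), with Lemma \ref{l14} handling the heat averages, Lemma \ref{l6} handling the far part, and the same per-group estimates producing the three maximal-function terms. Two small remarks: in group (iii) there is in fact no $2^{kn}$ volume loss to absorb, since the kernel factor $|y-z|^{-(n-\alpha)}\approx |2^{k+1}B|^{-(1-\alpha/n)}$ is exactly the fractional-average normalization and the series is simply $\sum_k k\,2^{-2k}<\infty$; and the smoothing step in group (ii) is made rigorous in the paper via Kolmogorov's inequality together with the weak $(1,\,n/(n-\alpha))$ boundedness of $L^{-\alpha/2}$, which is the tool your sketch leaves implicit.
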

\begin{proof}
For any given $x\in \rz,$ fix a ball $B=B(x_0, r_B)$ which contains
$x.$ We decompose $f=f_1+f_2, $ where $f_1=f\chi_{2B}.$ Observe that
$$[b, L^{-\alpha/2}]f(x)=(b-b_B)L^{-\alpha/2}f-L^{-\alpha/2}(b-b_B)f_1-L^{-\alpha/2}(b-b_B)f_2$$
and
$$e^{-t_B L}([b,L^{-\alpha/2}]f)=e^{-t_B L}[(b-b_B)L^{-\alpha/2}f-L^{-\alpha/2}(b-b_B)f_1-L^{-\alpha/2}(b-b_B)f_2].$$
Then {\allowdisplaybreaks \begin{eqnarray*}
&&\l(\frac{1}{|B|}\int_B \l|[b,L^{-\alpha/2}]f(y)-e^{-t_B L}[b,L^{-\alpha/2}]f(y)\r|^\dz dy\r)^{1/\dz}\\
&\leq & C\l(\frac{1}{|B|}\int_B \l|(b(y)-b_B)L^{-\alpha/2}f(y)dy\r|^\dz\r)^{1/\dz}\\
& \qq &+C\l(\frac{1}{|B|}\int_B \l|L^{-\alpha/2}(b(y)-b_B)f_1)(y)|dy\r|^\dz\r)^{1/\dz}\\
& \qq &+C\l(\frac{1}{|B|}\int_B \l|e^{-t_BL}((b(y)-b_B)L^{-\alpha/2}f)(y)|dy\r|^\dz\r)^{1/\dz}\\
& \qq &+ C\l(\frac{1}{|B|}\int_B \l|e^{-t_BL}L^{-\alpha/2}((b(y)-b_B)f_1(y))|dy\r|^\dz\r)^{1/\dz}\\
& \qq& +C\l(\frac{1}{|B|}\int_B \l|(L^{-\alpha/2}-e^{-t_BL}L^{-\alpha/2})((b(y)-b_B)f_2)(y)|dy\r|^\dz\r)^{1/\dz}\\
&\doteq &I +II+ III +IV+V.
\end{eqnarray*}}

We are going to estimate each term, respectively. Fix $0<\dz<1$ and
choose a real number $\tau$ such that $1<\tau<2$ and $\tau' \dz<1$.
Since $\omega\in A_1,$ then it follows from H\"{o}lder's inequality
that {\allowdisplaybreaks
\begin{eqnarray*}
I &\leq& C\l(\frac{1}{|B|}\int_B \l|(b(y)-b_B)\r|^{\tau
\dz}dy\right)^{\frac{1}{\tau \dz}}
\left(\int_B \l|L^{-\alpha/2}f(y)\r|^{\tau'\dz} dy\right)^{\frac{1}{\tau'\dz}}\\
&\leq& C\l(\frac{1}{|B|}\int_B \l|(b(y)-b_B)\r|dy\right)
\left(\int_B \l|L^{-\alpha/2}f(y)\r| dy\right)\\
&\leq& C\|b\|_{Lip_\beta(\omega)}
\frac{1}{|B|}\omega(B)^{1+{\beta}/{n}}
\left(\int_B \l|L^{-\alpha/2}f(y)\r| dy\right)\\
&\leq &C \|b\|_{Lip_\beta(\omega)}\omega(x)^{1+\bz/n}
M_{\beta,1}(L^{-\alpha/2}f)(x).
\end{eqnarray*}}

 For II,  using H\"{o}lder's inequality and Kolmogorov's inequality(see\cite{GC}, p.485), then we deduce that
 {\allowdisplaybreaks
\begin{eqnarray*}
II &\leq & C \frac{1}{|B|}\int_B |L^{-\alpha/2}(b(y)-b_B)f_1)(y)|dy\\
& \leq & C \frac{1}{|B|} |B|^{\frac{\alpha}{n}}\|L^{-\alpha/2}(b(y)-b_{2B})f_1\|_{L^{\frac{n}{n-\alpha},\infty}}\\
&\leq&C  \frac{1}{|B|^{1-\frac{\alpha}{n}}}\int_B (b(y)-b_{2B})f_1(y)dy\\
&\leq & C
\|b\|_{Lip_\beta(\omega)}\omega(x)^{1-\frac{\alpha}{n}}M_{\alpha+\beta,
r,\omega}f(x).
\end{eqnarray*}}

Using H\"{o}lder's inequality and Lemma \ref{l14}, we obtain that
$$III \leq C  \|b\|_{Lip_\beta(\omega)}\omega(x) M_{\beta,r,\omega}(L^{-\alpha/2}f)(x).$$

For IV, using the estimate in II, we get {\allowdisplaybreaks
\begin{eqnarray*}
IV &\leq &   \frac{C}{|B|}\int_B \int_{2B}|p_{t_B}(y,z)\|b(z)-b_B\|f(z)|dzdy\\
&\leq &  \frac{C}{|2B|} \int_{2B}L ^{-\alpha/2}((b(z)-b_B))f(z)|dz\\
& \leq &  C
\|b\|_{Lip_\beta(\omega)}\omega(x)^{1-\frac{\alpha}{n}}M_{\alpha+\beta,
r,\omega}f(x).
\end{eqnarray*}}

 By virtue of Lemma \ref{l6}, we have
 {\allowdisplaybreaks
\begin{eqnarray*}
V&\leq &  \frac{C}{|B|}\int_B \int_{(2B)^c}|K_{\alpha,t_B}(y,z)\|(b(z)-b_B)f(z)|dzdy\\
&\leq & \frac{C}{|B|}\sum_{k=1}^\infty \int_{2^k r_B\leq |x_0-z|<2^{k+1}r_B}\frac{1}{|x_0-z|^{n-\alpha}}\frac{r^2_B}{|x_0-z|^2}|(b(z)-b_B)f(z)|dz\\
&\leq & C \sum_{k=1}^\infty 2^{-2k} \frac{1}{|2^{k+1}B|^{1-\frac{\alpha}{n}}}\int_{2^{k+1}B} |(b(z)-b_B)f(z)|dz\\
 &\leq& C \sum_{k=1}^\infty 2^{-2k} \frac{1}{|2^{k+1}B|^{1-\frac{\alpha}{n}}}\int_{2^{k+1}B} |(b(z)-b_{2^{k+1}B})f(z)|dz\\
 &\quad & + C \sum_{k=1}^\infty 2^{-2k}(b_{2^{k+1}B}-b_B) \frac{1}{|2^{k+1}B|^{1-\frac{\alpha}{n}}}\int_{2^{k+1}B} |f(z)|dz\\
 &\doteq&  VI +VII.
\end{eqnarray*}}
Making use of the same argument as that of II, we have
$$
VI\leq
C\|b\|_{Lip_\beta(\omega)}\omega(x)^{1-{\alpha}/{n}}M_{\alpha+\beta,
r,\omega}f(x).
$$

Note that $\omega \in A_1,$
$$|b_{2^{k+1}B}-b_{2B}|\leq C k \,\omega (x)\|b\|_{Lip_\beta(\omega)}\omega(2^{k+1}B)^{{\beta}/{n}}.$$
So, the value of $VII$ can be controlled by
$$
C\|b\|_{Lip_\beta(\omega)}
\omega(x)^{1+{\beta}/{n}}M_{\alpha+\beta, 1}f(x).
$$

Combining the above estimates for I--V, we finish the
proof of Lemma \ref{l15}.
\end{proof}

\textit{{Proof of Theorem \ref{t4}.}} We first prove $(a)$.
 As before, we only prove Theorem \ref{t4} in the case
$0<\az<1$. For $0<\az+\bz<n$ and $1<p<n/(\az+\bz)$, we can find a
number $r$ such that $1<r<p$. By Lemma \ref{l15}, we obtain
{\allowdisplaybreaks
\begin{eqnarray*}
&&\|[b,\ L^{-\alpha/2}]f\|_{L^{q, kq/p}(\wz^{1-(1-\az/n)q},\
\wz)}\\
&\leq &C \|M^\sharp_{L,\dz}([b,\ L^{-\alpha/2}]f)\|_{L^{q, kq/p}(\wz^{1-(1-\az/n)q},\
\wz)}\\
&\leq &C
\|b\|_{Lip_{\bz}(\wz)}\l(\|\omega(\cdot)^{1+\frac{\beta}{n}}M_{\beta,1}(L^{-\alpha/2}f)\|_{L^{q,
kq/p}(\wz^{1-(1-\az/n)q},\ \wz)}\r.\\
&\q+&\|\omega(\cdot)^{1-\frac{\alpha}{n}}M_{\alpha+\beta,
r,\omega}f\|_{L^{q, kq/p}(\wz^{1-(1-\az/n)q},\
\wz)}\\
&\q+&\l.\|\omega(\cdot)^{1+\frac{\beta}{n}}M_{\alpha+\beta,
1}f\|_{L^{q, kq/p}(\wz^{1-(1-\az/n)q},\ \wz)}\r)\\
&\leq &C
\|b\|_{Lip_{\bz}(\wz)}\l(\|M_{\beta,1}(L^{-\alpha/2}f)\|_{L^{q,
kq/p}(\wz^{q/p},\ \wz)}\r.\\
&\q+&\|M_{\alpha+\beta, r,\omega}f\|_{L^{q,
kq/p}(\wz)}+\l.\|M_{\alpha+\beta, 1}f\|_{L^{q, kq/p}(\wz^{q/p},\
\wz)}\r).
\end{eqnarray*}}
Let $1/{q_1}=1/p-\az/n$ and $1/q=1/{q_1}-\bz/n$.  Lemmas
\ref{l1}--\ref{l5} yield that
\begin{eqnarray*}
&&\|[b,\ L^{-\alpha/2}]f\|_{L^{q, kq/p}(\wz^{1-(1-\az/n)q},\
\wz)}\\
&\leq &C \|b\|_{Lip_{\bz}(\wz)}\l(\|L^{-\alpha/2}f\|_{L^{{q_1},
k{q_1}/p}(\wz^{{q_1}/p},\ \wz)}+\|f\|_{L^{p, k}(\wz)}\r)\\
&\leq &C \|b\|_{Lip_{\bz}(\wz)}\|f\|_{L^{p, k}(\wz)}.
\end{eqnarray*}

Now we prove (b). Let $L = -\Delta$ be the Laplacian on $\rz$,
then $L^{-{\alpha/ 2}}$ is the classical fractional integral
$I_\alpha$. We use the same argument as Janson \cite{Jan}. Choose $Z_0 \in \rz$ so that $|Z_0|=3.$ For $x\in
B(Z_0,2),$ $|x|^{-\alpha+n}$ can be written as the absolutely
convergent Fourier series, $|x|^{-\alpha+n}=\sum_{m\in Z_n}a_m
e^{i<\nu_m,x>}$ with $\sum_m |a_m|<\infty$ since $|x|^{-\alpha+n}
\in C^\infty(B(Z_0,2))$. For any $x_0\in \rz$ and $\rho>0,$ let
$B=B(x_0,\rho)$ and $B_{Z_0}=B(x_0+Z_0 \rho,\rho),$
{\allowdisplaybreaks \begin{eqnarray*}
&&\int_B |b(x)-b_{B_{Z_0}}|dx=\frac{1}{|B_{Z_0}|}\int_B \l|\int_{B_{Z_0}}(b(x)-b(y))dy\r|dx\\
&=&\frac{1}{\rho^n}\int_B
s(x)\l(\int_{B_{Z_0}}(b(x)-b(y))|x-y|^{-\alpha+n}|x-y|^{n-\alpha}dy
\r)dx,
\end{eqnarray*}
}where $s(x)=\overline{\sgn(\int_{B_{Z_0}}(b(x)-b(y))dy)} .$ Fix
$x\in B$ and $y\in B_{Z_0}$, then ${(y-x)}/{\rho}\in B_{Z_0,2}$,
hence, {\allowdisplaybreaks
\begin{eqnarray*}
&&\frac{\rho^{-\alpha+n}}{\rho^n}\int_B s(x)
\l(\int_{B_{Z_0}}\l(b(x)-b(y)\r)|x-y|^{-\alpha+n}\l(\frac{|x-y|}{\rho}\r)^{n-\alpha}dy \r)dx\\
&= &\rho^{-\alpha}\sum_{m\in Z^n}a_m \int_B s(x)
\l(\int_{B_{Z_0}}\l(b(x)-b(y)\r)|x-y|^{n-\alpha}e^{i<\nu_m,y/\rho>}dy \r)e^{-i<\nu_m,x/\rho>}dx\\
&\leq &\rho^{-\alpha} \l|\sum_{m\in Z^n}|a_m|
\int_B s(x)[b,L^{-{\alpha/ 2}}]\l(\chi_{B_{Z_0}}e^{i<\nu_m,\cdot/\rho>}\r)\chi_B(x)e^{-i<\nu_m,x/\rho>}dx\r| \\
&\leq &\rho^{-\alpha} \sum_{m\in Z^n}|a_m|
\|[b,L^{-{\frac{\alpha}{2}}}](\chi_{B_{Z_0}}e^{i<\nu_m,\cdot/\rho>})\|_{L^{q,0}
(\omega^{1-(1-\alpha/ n)q},\omega)} \l(\int_B \omega(x)^{q'(\frac{1}{{q'}}-\frac{\alpha}{ n})}dx\r)^{\frac{1}{q'}}\\
&\leq &C\rho^{-\alpha} \sum_{m\in Z^n}|a_m|
\|\chi_{B_{Z_0}}\|_{L^{p,0}(\omega)}
\l(\int_B \omega(x)^{{q'}(1/{q'}-\alpha/ n)}dx\r)^{\frac{1}{q'}}\\
&\leq &C \omega(B)^{1/p+1/{q'}-\alpha/ n}= C \omega(B)^{1+\beta/n}.
\end{eqnarray*}
This implies that $b\in Lip_{\bz}(\omega)$. Thus, $(b)$ is proved.
\qed
}


\begin{thebibliography}{111}


\bibitem {DY1}X. T. Duong and L. Yan, On commutators of fractional integral, Proc. Amer. Math. Soc., 132 (2004), 3549--3557.


\bibitem {garcia}J. Garcia-Guerva, Weighted $H^p$
spaces, Dissertations Math., 162(1979), 1--63.

\bibitem {GC}J. Garc¨ªa-Cuerva, J.L. Rubio de Francia, Weighted Norm Inequalities and Related Topics, North-Holland, Amsterdam, 1985.

\bibitem {Jan}S. Janson, Mean oscillation and commutators of singular integral operators, Ark.
Math., 16 (1978), 263--270.

\bibitem {ks}Y. Komori and S. Shirai, Weighted Morrey spaces and a singular integral
operator, Math. Nachr., 282(2009), 219-231.

\bibitem {Mar}J. M. Martell, Sharp maximal functions associated with approximations
of the identity in spaces of homogeneous type and applications, Studia Math., 161 (2004), 113--145.

\bibitem {Pa} M. Paluszy\'{n}ski, Characterization of the Besov
spaces via the commutator operator of Coifman, Rochberg and Weiss,
Indiana Univ. Math. J., 44 (1995), 1--17.


\bibitem {P1}C. P\'{e}rze, Endpoint estimates for commutators of singular integral
operators, J. Funct. Anal., 128(1995), 163--185. MR 1317714


\bibitem {Sh1}S. Shirai, Necessary and sufficient conditions for boundedness of commutators of fractional
integral operators on classical Morrey spaces, Hokkaido Math. J., 35
(2006), 683--696.

\bibitem {St1}E. M. Stein, Harmonic Analysis: Real-Variable methods, Orthogonality, and Oscillatory Integrals,
Princeton New Jersey. Princeton Univ Press, 1993.

\bibitem {Tor}A. Torchinsky, Real-Variable Methods in Harmonic Analysis, Academic Press, New York, 1986.

\bibitem {WH} H. Wang, On some commutator theorems for fractional integral operators on the weighted morrey
spaces, arXiv:1010.2638v1 [math. CA]

\bibitem {WH2} H. Wang,  Some estimates for the commutators of fractional integrals associated to operators with Gaussian kenerl bounds, arXiv:1102.4380v1 [math. CA]
\end{thebibliography}
\end{document}